\title{\sc Regular decompositions of finite root systems and simple Lie algebras}
\author{\sc Stepan Maximov}
\date{\sc \today}
\begin{document}
\maketitle

\begin{abstract}
    Let $\mathfrak{g}$ be a finite-dimensional simple Lie algebra over an algebraically closed field of characteristic 0.
    In this paper we classify all regular decompositions of $\fg$ and its irreducible root system $\Delta$.
    
    A regular decomposition is a decomposition $\mathfrak{g} = \mathfrak{g}_1 \oplus \dots \oplus \mathfrak{g}_m$, where each $\fg_i$ and $\fg_i \oplus \fg_j$ are regular subalgebras. Such a decomposition induces a partition of the corresponding root system, i.e.\ $\Delta = \Delta_1 \sqcup \dots \sqcup \Delta_m$, such that all $\Delta_i$ and $\Delta_i \sqcup \Delta_j$ are closed. 
    
    Partitions of $\Delta$ with $m=2$ were known before. In this paper we prove that the case $m \ge 3$ is possible only for systems of type $A_n$ and describe all such partitions in terms of $m$-partitions of $(n+1)$.
    These results are then extended to a classification of regular decompositions of $\fg$.
\end{abstract}

\section{Introduction}
Let $\fg$ be a finite-dimensional Lie algebra with a Lie group $G$. 
Decompositions 
\begin{equation}%
\label{eq:splitting_subalgebras}
   \fg = \fg_1 \oplus \fg_2 
\end{equation}
of $\fg$ into a direct sum of two subalgebras give rise to many non-linear dynamical systems, that are Liouville integrable and can be reduced to a factorization problem inside $G$ \cite{adler_moerbeke_vanhaecke,semenov2008integrable,rudolph2012differential}.
When $\fg$ is additionally equipped with a non-degenerate symmetric ad-invariant bilinear form, such that both $\fg_1$ and $\fg_2$ become Lagrangian subalgebras, we get a Manin triple $(\fg, \fg_1, \fg_2)$, that defines a Lie bialgebra structure on $\fg_1$ (and $\fg_2$). 
Lie bialgebras are infinitesimal objects related to Poisson-Lie groups, as Lie algebras related to Lie groups \cite{kosmann-schwarzbach,laurent2012poisson}. 
These objects were introduced in \cite{drinfeld_hamiltonian_structures} and they play an import role in quantum group theory \cite{drinfeld_quantum_groups,etingof_schiffmann}.

Decompositions \cref{eq:splitting_subalgebras} can be naturally generalised to subalgebra decompositions
\begin{equation}%
\label{eq:m_decomposition}
    \fg = \fg_1 \oplus \fg_2 \oplus \dots \oplus \fg_m,
\end{equation}
such that $\fg_i \oplus \fg_j$ is again a subalgebra of $\fg$.
It was shown in \cite{golubchik_sokolov_yang_baxter} that any such decomposition gives rise to another subalgebra decomposition
\begin{equation}%
\label{eq:Laurent_decomposition}
    \fg(\!(x)\!) = \fg[\![x]\!] \oplus W,
\end{equation}
where $\fg(\!(x)\!)$ and $\fg[\![x]\!]$ are the Lie algebras of Laurent and Taylor series respectively.
Algebra decompositions of the form  \cref{eq:Laurent_decomposition} correspond to so-called classical generalized formal $r$-matrices \cite{abedin_maximov_stolin_quasibialgebras, abedin_maximov_stolin_zelmanov}.
These generalized $r$-matrices can again be used to construct integrable systems, for example generalized Gaudin models \cite{skrypnyk_spin_chains}.

Classification of decompositions \cref{eq:splitting_subalgebras} even for simple complex Lie algebras is a representation wild problem.
For this reason, to obtain some sensible classification, we introduce more constraints on decompositions.
More precisely, we look at a finite-dimensional simple Lie algebra $\fg$ over an algebraically closed field $F$, with a fixed Cartan subalgebra $\fh$ and the corresponding root space decomposition
\begin{equation}%
\label{eq:root_spaces}
    \fg = \fh \bigoplus_{\alpha \in \Delta} \fg_\alpha.
\end{equation}
We consider regular decompositions of $\fg$, i.e.\ decompositions that respect the root structure \cref{eq:root_spaces}.

An \emph{$m$-regular decomposition of $\fg$} is a decomposition
\begin{equation}%
\label{eq:regular_decomp}
    \fg = \bigoplus_{i = 1}^m \fg_i, \ m \ge 2
\end{equation}
satisfying the following restrictions:
\begin{enumerate}
    \item all $\fg_i$ as well as $\fg_i \oplus \fg_j$ are Lie subalgebras of $\fg$;
    \item each $\fg_i$ has the form $ \fs_i \bigoplus_{\alpha \in \Delta_i} \fg_\alpha$ for some subspace $\fs_i \subseteq \fh$ and some subset $\Delta_i \subseteq \Delta$.
\end{enumerate}
Subalgebras of $\fg$ of the form described in 2.\ are called regular, motivating the name.

It is clear from the definition above, that by forgetting the Cartan part of $\fg$, we obtain a partition of the corresponding irreducible root system
$$
  \Delta = \bigsqcup_{i=1}^m \Delta_i,
$$
such that all $\Delta_i$ and $\Delta_i \sqcup \Delta_j$ are closed under root addition. 
By analogy with \cref{eq:regular_decomp}
we call such a partition \emph{$m$-regular}.


Regular partitions $\Delta = \Delta_1 \sqcup \Delta_2$ are precisely the partitions $\Delta = \Delta_1 \sqcup (\Delta \setminus \Delta_1)$, such that both $\Delta_1$ and its complement $\Delta_1^c \coloneqq (\Delta \setminus \Delta_1)$ are closed.
Subsets $\Delta_1 \subseteq \Delta$ with this property are called \emph{invertible}. 
Invertible subsets of finite irreducible root systems were classified in \cite{dokovic_check_hee} up to the action of the corresponding Weyl group $W(\Delta)$. 

We prove in \cref{prop:2_partititons} that any $2$-regular partition of $\Delta$ can be extended (non-uniquely) to a $2$-regular decomposition of the corresponding simple Lie algebra $\fg(\Delta)$. 
Non-uniqueness comes essentially from the non-uniqueness of vector space factorization $\fh = \fh_1 \oplus \fh_2$.
This gives a description of all $2$-regular decompositions of $\fg$ up to a choice of such a decomposition of $\fh$.

Observe that the results of \cite{dokovic_check_hee} cannot be applied inductively to $\Delta = \Delta_1 \sqcup \Delta_1^c$, because neither of the two sets is symmetric and hence neither of them is a root system.
One of the main results of this paper is the following description of all $(m \ge 3)$-regular partitions.

\begin{maintheorem}%
\label{mainthm:root_partitions}
    Let $\Delta = \sqcup_1^m \Delta_i$ be a regular partition of a finite irreducible root system into $m \ge 3$ parts. 
    Then 
    \begin{enumerate}
        \item $\Delta$ is necessarily of type $A_n$, $n \ge 2$;
        \item Up to swapping positive and negative roots, re-numbering elements $\Delta_i$ of the partition and action of $W(A_n)$, there is a unique maximal $(n+1)$-partition of $A_n$ with
        $$
    \Delta_i = \{-\beta_i + \beta_j \mid 0 \le i \neq j \le n \}, \ \ 0 \le i \le n,
    $$
    where $\beta_0 = 0$, $\beta_i = \alpha_1 + \dots + \alpha_i$, $1 \le i \le n$ and $\pi = \{ \alpha_1 , \dots, \alpha_n\}$ are simple roots of $\Delta$;
    \item Any other $(m < n+1)$-regular partition is obtained from the maximal one above by combining several subsets $\Delta_i$ together;
    \item Up to equivalences mentioned above, all $m$-regular partitions are described by $m$-partitions $\lambda = (\lambda_1, \dots, \lambda_m)$ of $n+1$. 
    \end{enumerate}
\end{maintheorem}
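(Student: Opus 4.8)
The plan splits into the impossibility statement (1) and the positive classification (2)--(4).

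\emph{Reductions.} In any regular partition $\Delta=\bigsqcup_{i=1}^{m}\Delta_i$ every block $\Delta_i$ is invertible: its complement $\Delta_i^c=\bigsqcup_{j\ne i}\Delta_j$ is closed, because a sum of two roots drawn from $\Delta_j$ and $\Delta_k$ with $j,k\ne i$ lands in $\Delta_j$ when $j=k$ and in the closed set $\Delta_j\sqcup\Delta_k$ when $j\ne k$, hence never in $\Delta_i$. The same argument shows each $\Delta_i\sqcup\Delta_j$ is invertible and, crucially, that merging any sub-collection of the blocks again yields a regular partition; so for (1) it suffices to exclude $3$-regular partitions $\Delta=\Delta_1\sqcup\Delta_2\sqcup\Delta_3$. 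I would also record the restriction principle: if $\Delta'\subseteq\Delta$ is a closed symmetric subsystem then $\Delta'=\bigsqcup_i(\Delta_i\cap\Delta')$ is a regular partition of $\Delta'$, with the caveat that some $\Delta_i\cap\Delta'$ may be empty --- controlling this is the point.

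\emph{Part (1): only type $A$.} First I would settle rank two by direct computation: $A_2$ admits exactly one $3$-regular partition (the one in the statement, up to $W(A_2)$ and the sign flip), whereas $B_2$ and $G_2$ admit none. The mechanism, which I would isolate as a lemma, is that if two roots $\alpha,\beta$ lie in the same block then every root $\alpha+k\beta$ lies in that block too; in $B_2$ or $G_2$ this drags an entire chain of positive (or negative) roots into one block, and one checks the rest cannot be split into two more closed pieces with all pairwise unions closed. To pass to general $\Delta$: a non-simply-laced $\Delta$ contains a $B_2$-subsystem, and a simply-laced $\Delta$ not of type $A$ contains a $D_4$-subsystem, so it is enough to verify (a) $D_4$ has no $3$-regular partition, and (b) in a non-type-$A$ $\Delta$ carrying a hypothetical $3$-regular partition one can choose a $B_2$- (respectively $D_4$-) subsystem met by all three blocks. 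Step (b) --- forcing the restriction principle to bite in spite of possibly empty intersections, e.g.\ by starting from representatives $\gamma_i\in\Delta_i$, passing to the subsystem they generate and enlarging it inside a minimal non-type-$A$ subsystem --- is where I expect the real work to lie, together with the finite but not wholly mechanical check for $D_4$. An alternative is to feed the classification of invertible subsets from \cite{dokovic_check_hee} into the problem and check, type by type, which invertible subsets admit a refinement into two closed blocks with closed pairwise unions.

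\emph{Parts (2)--(4): the type $A_n$ classification.} Realise $A_n$ as the arrows of the complete graph on $V=\{0,1,\dots,n\}$, writing the root $\beta_v-\beta_u$ as the arrow $u\to v$, so that root addition is composition of arrows; then $\Delta_i=\{\beta_j-\beta_i\mid j\ne i\}$ is the set of arrows out of $i$. Given an $m$-regular partition with $m\ge3$, let $c(u,v)$ be the index of the block containing $u\to v$. The heart of the matter is the rigidity claim that $c$ is either a function of the tail $u$ alone or a function of the head $v$ alone; granting this, $c$ is governed by a set partition $V=B_1\sqcup\dots\sqcup B_m$ with $\Delta_i=\{\text{arrows with tail in }B_i\}$ (or with head in $B_i$ in the other case). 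To prove rigidity, suppose $c(u,v)\ne c(u,v')$ for some $u$ and some $v\ne v'$; applying closure of the relevant pairwise unions to the triangle $\{u,v,v'\}$ pins down $c(v,v')$ and $c(v',v)$, and iterating over the remaining vertices --- here $m\ge3$ is essential, to keep the forced labels from collapsing into two values as happens for ordinary $2$-regular partitions --- shows $c$ must then be a function of the head alone. The configurations that would violate this are excluded precisely by closure of the \emph{pairwise} unions $\Delta_i\sqcup\Delta_j$, not merely of the individual $\Delta_i$. Conversely every block partition is plainly regular, and each arises by coarsening the singleton partition $(\{0\},\dots,\{n\})$; this gives the maximality and uniqueness of the $(n+1)$-partition in (2) and the ``combine several $\Delta_i$'' description in (3). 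Finally $W(A_n)=S_{n+1}$ permutes $V$, the sign flip interchanges tail-partitions with head-partitions, and renumbering permutes the $B_i$, so after quotienting a block partition is recorded exactly by the multiset of block sizes, i.e.\ an $m$-partition $\lambda\vdash(n+1)$, which is (4). The only delicate point in this half is the rigidity claim, and it is exactly the pairwise-union hypothesis that makes it go through.
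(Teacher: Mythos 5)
Your reduction lemmas (merging blocks preserves regularity; each block and each pairwise union is invertible; restriction to a closed symmetric subsystem yields a regular partition with possibly empty parts) are all correct, and your treatment of parts (2)--(4) is sound: the ``arrow colouring'' $c(u,v)$ on the complete graph on $\{0,\dots,n\}$ is just a repackaging of the paper's labelled multigraph on the $m$ blocks, your rigidity claim (for $m\ge 3$ the colour depends only on the tail or only on the head) is exactly the content of the paper's Properties A1--A5 and its Theorem on reconstruction from the graph, and the triangle-chasing you sketch is the same kind of closure argument the paper carries out. The passage from block partitions of $\{0,\dots,n\}$ to partitions $\lambda\vdash(n+1)$ via $W(A_n)=S_{n+1}$ and the sign flip also matches the paper (its Lemma on the Weyl group action).

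The genuine gap is in part (1), at precisely the step you flag as ``where the real work lies.'' Knowing that $B_2$, $G_2$ and $D_4$ admit no $3$-regular partition does not by itself exclude the other types: you must produce, inside a hypothetical $3$-regular partition of $B_n$, $C_n$, $F_4$, $D_n$ or $E_n$, a $B_2$- (resp.\ $D_4$-) subsystem on which \emph{all three} blocks restrict non-trivially, and nothing in your outline delivers this. The obvious attempts fail: the subsystem generated by representatives $\gamma_i\in\Delta_i$ may well be of type $A_2$ or $A_1^{\,3}$ (where $3$-regular partitions do exist), and enlarging it to a non-type-$A$ subsystem can kill the property of meeting all three blocks. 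The paper's own proofs show why this is delicate: its contradictions for $B_n$ and $C_n$ use roots such as $2\beta_n-\beta_i$ that tie the long-root structure to the \emph{global} graph of the partition rather than to a rank-two subsystem, its $D_n$ argument needs a full page (Lemma on the subgraphs $G_1,G_2$) just to show that restriction to two specific $A_{n-1}$-subsystems keeps three non-empty parts, and the author explicitly remarks that no unified approach was found. So the impossibility half of the theorem is not established by your proposal; either you must prove the ``meets all three blocks'' statement for a well-chosen subsystem in each type (which is at least as hard as the paper's case-by-case analysis), or you must fall back on your alternative of running through the classification of invertible subsets of \cite{dokovic_check_hee} type by type, which you also have not carried out.
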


\begin{remark}
  Non-existence of $(m\ge 3)$-regular partitions of root systems of other types is proven case-by-case.  
  In all cases, the obstruction arises because of coefficients $k_i > 1$ in the decomposition of the maximal root into a sum of simple roots.
  Unfortunately, we were not able to find a unified approach.
\end{remark}

The statement of \cref{mainthm:root_partitions} implies immediately, that $m$-regular decompositions of a simple Lie algebra $\fg$ is possible only in the case $\fg \cong \mathfrak{sl}(n+1, F)$, $n \ge 2$.
We prove in \cref{prop:m_partititons} that any $m$-regular partition of $\Delta$ can be extended to an $m$-regular decomposition of $\fg$.

In an $m$-regular decomposition of $\fg$ the Cartan part $\fh$ is distributed among $\fg_i$'s in a very restrictive way. The larger $m$ we take, the more restrictions we get.
We say that a regular decomposition of $\fg$ is of \emph{type $(m,k)$} if it is an $m$-regular decomposition with exactly $k$ summands having non-zero Cartan parts $s_i$.

In the following theorem, we view $\mathfrak{sl}(n+1, F)$ as the algebra of traceless $(n+1)\times (n+1)$-matrices over $F$ and we write $E_{i,j}$ for the $(0,1)$-matrix with $1$ in position $(i,j)$, and put $H_i \coloneqq E_{1,1} - E_{i+1, i+1}$ for $0 \le i \le n$.

\begin{maintheorem}%
\label{mainthm:classification_algebra_decompostions}
    Let $\mathfrak{sl}(n+1, F) = \oplus_1^m \fg_i$ be an $(m,k)$-regular decomposition. Then
    \begin{enumerate}
        \item $(m,k) = (k+1,k)$ or $(m,k) = (k,k)$;
        \item Up to swapping positive and negative roots, re-numbering $\fg_i$'s and the action of $W(A_n)$ any $(k+1, k)$-regular decomposition is of the form
        \begin{equation*}
        \begin{aligned}
            \fg_1 &= \textnormal{span}_F \{
            E_{1, j} \mid 2 \le j \le n+1 \}, \\
            \fg_\ell &= \textnormal{span}_F \Bigg\{E_{i+1,j}, \ H_i \bigg| \sum_{t=1}^{\ell-2} \lambda_t < i \le \sum_{t=1}^{\ell-1} \lambda_t, \ 1 \le j \neq i+1 \le n+1  \Bigg\},
        \end{aligned}
    \end{equation*}
    where  $2 \le \ell \le k+1$ and $(\lambda_1, \dots, \lambda_k)$ is a $k$-partition of $n$.
    \item Up to the same equivalences, any $(k,k)$-regular decomposition is given by 
    \begin{equation*}
    \begin{aligned}
      \fg_1 &= \textnormal{span}_F \{ 
      E_{i+1,j}, H_i, X \mid 0 \le i \le \lambda_1, \ 1 \le j \neq i+1 \le n+1
      \}, \\
      \fg_\ell &= \textnormal{span}_F \Bigg\{
      E_{i+1,j}, H_i - X \bigg|
      \sum_{1}^{\ell - 1} \lambda_t < i 
      \le \sum_1^{\ell} \lambda_t, \ 0 \le j \neq i+1 \le n+1
      \Bigg\},
    \end{aligned}
    \end{equation*}
    where $2 \le \ell \le k$, \ $(\lambda_1, \dots, \lambda_k)$ is a $k$-partition of $n$ and 
    $$
    X \in \textnormal{span}_F \{H_{1}, \dots, H_{\lambda_1} \} \cup 
    \left\{ 
    H_p \mid 2 \le q \le k, \  \lambda_q > 1, \ 
    \sum_{t=1}^{q-1} \lambda_t < p \le \sum_{t=1}^{q} \lambda_t
    \right\}
    $$ 
    is an arbitrary element.
    \end{enumerate}
\end{maintheorem}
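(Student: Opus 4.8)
The plan is to reduce the classification to a problem of linear algebra inside the Cartan subalgebra, once \cref{mainthm:root_partitions} has pinned down the combinatorics of the root-system part. By \cref{mainthm:root_partitions}, together with the remark that an $m$-regular decomposition forces $\fg\cong\mathfrak{sl}(n+1,F)$, we may take $\fg=\mathfrak{sl}(n+1,F)$, write $\Delta=\{\epsilon_a-\epsilon_b\mid 0\le a\neq b\le n\}$ with $\epsilon_a\leftrightarrow E_{a+1,a+1}$ and $\fg_{\epsilon_a-\epsilon_b}=F\,E_{a+1,b+1}$, and arrange --- up to $W(A_n)$, renumbering the $\fg_i$, and swapping positive and negative roots --- that the induced partition is the block one: a partition $\mu=(\mu_1,\dots,\mu_m)$ of $n+1$ cuts $\{0,\dots,n\}$ into consecutive blocks $B_1,\dots,B_m$ with $|B_\ell|=\mu_\ell$, and $\Delta_\ell=\{\epsilon_a-\epsilon_b\mid a\in B_\ell,\ b\neq a\}$ (``all rows in $B_\ell$''). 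The root-space part of the decomposition is then completely rigid, so everything is about the possible Cartan pieces $\fs_\ell\subseteq\fh$.

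Next I would translate the subalgebra axioms. Put $V_\ell:=\operatorname{span}_F\{\epsilon_a-\epsilon_b\mid a,b\in B_\ell\}$ (the traceless diagonal matrices supported on $B_\ell$) and let $V_{\ell\ell'}$ be the analogous span over $B_\ell\cup B_{\ell'}$. A direct check, using that the $\Delta_\ell$ and $\Delta_\ell\sqcup\Delta_{\ell'}$ are closed by hypothesis, shows: $\fg_\ell$ is a subalgebra iff $\fs_\ell\supseteq V_\ell$; $\fg_\ell\oplus\fg_{\ell'}$ is a subalgebra iff $\fs_\ell+\fs_{\ell'}\supseteq V_{\ell\ell'}$; and $\fg=\bigoplus_\ell\fg_\ell$ iff $\fh=\bigoplus_\ell\fs_\ell$. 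Setting $V:=\bigoplus_\ell V_\ell$, of dimension $n+1-m$, and passing to $\fh/V$, which is canonically the Cartan subalgebra $\{(d_1,\dots,d_m)\mid\sum_\ell d_\ell=0\}$ of $\mathfrak{sl}(m,F)$, the images $\bar\fs_\ell$ form a direct-sum decomposition of $\fh/V$ into $m$ subspaces with $\bar\fs_\ell+\bar\fs_{\ell'}\ni\bar e_\ell-\bar e_{\ell'}$, where $\bar e_\ell$ denotes the common image of the $\epsilon_a$, $a\in B_\ell$. This is precisely the maximal-partition problem (the case $\mu=(1,\dots,1)$) for $A_{m-1}$.

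I would then solve the reduced problem and lift it. Since $m$ subspaces cannot sum directly to the $(m-1)$-dimensional space $\fh/V$ unless one of them is zero, and two zero summands would contradict $\bar\fs_\ell+\bar\fs_{\ell'}\ni\bar e_\ell-\bar e_{\ell'}\neq 0$, exactly one summand is zero, say $\bar\fs_z$; then necessarily $\bar\fs_\ell=F(\bar e_z-\bar e_\ell)$ for every $\ell\neq z$, and the remaining constraints are then automatic. Lifting back, directness forces $\fs_z=V_z$ and $\fs_\ell=V_\ell\oplus F w_\ell$ with $w_\ell$ a lift of $\bar e_z-\bar e_\ell$; imposing $\fs_\ell+\fs_{\ell'}\supseteq V_{\ell\ell'}$ on pairs avoiding $z$ forces the $w_\ell$ to share a common $B_z$-component and to be supported on $B_z\cup B_\ell$, while the pairs containing $z$ force any remaining component to vanish, so the lift depends only on that common $B_z$-component, which (after fixing a reference coordinate in $B_z$) is an element $X\in V_z$, giving $\fs_\ell=\operatorname{span}_F\{H_i-X\mid i\in B_\ell\}$. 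Part (1) drops out: $|B_z|=1$ gives $\fs_z=0$, hence $k=m-1$ and $(m,k)=(k+1,k)$; $|B_z|\ge 2$ gives all $\fs_\ell\neq 0$, hence $(m,k)=(k,k)$. Matching to the stated normal forms is then bookkeeping: in the $(k+1,k)$ case $V_z=0$ forces $X=0$ and the decomposition is rigid, and reading off the sizes of $B_2,\dots,B_{k+1}$ as $(\lambda_1,\dots,\lambda_k)$ gives part (2); in the $(k,k)$ case one records which block is $z$ and the element $X$ to obtain part (3), with $X$ ranging over all of $V_1$ when $z=1$ and over the vectors $H_p$ ($p$ in a block of size $>1$) when $z\ge 2$. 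The converse --- that every configuration in (2) and (3) really is a regular decomposition --- is immediate from the translation above, and is in any case covered by \cref{prop:m_partititons}.

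The conceptual skeleton (reduce modulo $V$, solve the baby $A_{m-1}$ problem, lift) is clean; the real obstacle is the bookkeeping of the lifting step --- showing that the pairwise closedness conditions genuinely cut the freedom down to the single parameter $X$, and then reconciling this with the two-part description of where $X$ may live in part (3). The asymmetry between $B_1$, which carries an honest parameter $X\in V_1$, and the later blocks, where the special index $z$ gets recorded through $X=H_p$ while the internal twist is normalized away, is exactly where one must be careful about the chosen labeling, and it is the spot most likely to hide a subtlety. (For $m=2$ the block form of the partition is not provided by \cref{mainthm:root_partitions}; there one instead combines \cref{prop:2_partititons} with the classification of invertible subsets, but that case is essentially classical.)
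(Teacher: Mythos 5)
Your route is genuinely different from the paper's. The paper proves part (1) by showing that a summand containing all of $\fh$ must be parabolic (\cref{lem:one_Cartan_two_pieces}, \cref{cor:nk=nn_nn+1}), and parts (2)--(3) by chasing where each coroot $H_{\beta_i}$ can decompose (\cref{prop:k_k_1_decomposition}, \cref{prop:k_k_decomposition}). You instead fix the block partition coming from Theorem~A, observe that all remaining conditions are linear conditions on the Cartan pieces ($\fs_\ell\supseteq V_\ell$, $\fs_\ell+\fs_{\ell'}\supseteq V_{\ell\ell'}$, $\fh=\bigoplus\fs_\ell$), pass to $\fh/V$ where the problem becomes the singleton-block case for $A_{m-1}$, solve it by a dimension count, and lift. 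This reduction is correct (I checked the three equivalences and the quotient argument), it is arguably cleaner than the paper's element-wise case analysis, and it gives part (1) essentially for free: exactly one $\bar\fs_z$ vanishes, and $|B_z|=1$ versus $|B_z|\ge 2$ distinguishes $(k+1,k)$ from $(k,k)$.

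The gap is exactly where you predicted it: the final ``matching is bookkeeping'' step for part (3) with $z\ge 2$. Your own (correct) lifting analysis shows that the decomposition is parametrized by the common $B_z$-component $Y$ of the lifts $w_\ell$, which ranges over an \emph{affine space of dimension} $|B_z|-1$ (all diagonal matrices supported on $B_z$ of fixed trace); equivalently, $X$ ranges over the coset $H_p+V_z$, not over the finite set $\{H_p \mid p\in B_z\}$ appearing in the statement. The listed equivalences cannot collapse this: renumbering and sign-swapping do not touch $Y$, and the only Weyl elements preserving the block normal form act on $Y$ by permuting its coordinates, a finite group acting on a positive-dimensional family. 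Concretely, in $\mathfrak{sl}(6,F)$ with blocks $\{0,1\},\{2,3\},\{4,5\}$, $z=2$, $\fs_2=F(E_{3,3}-E_{4,4})$, $\fs_1=\textnormal{span}_F\{E_{1,1}-E_{2,2},\,E_{1,1}-\tfrac12 E_{3,3}-\tfrac12 E_{4,4}\}$ and $\fs_3=\textnormal{span}_F\{E_{5,5}-E_{6,6},\,E_{5,5}-\tfrac12 E_{3,3}-\tfrac12 E_{4,4}\}$, all the defining conditions hold, yet the corresponding $X$ lies in $H_p+V_2$ but not in the stated set and is not Weyl-conjugate to any listed $X$. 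So either you must supply an argument eliminating these extra parameters (none is available from the stated equivalences), or your classification is strictly larger than the statement you are matching it to; note that the paper's own \cref{prop:k_k_decomposition} passes over the same point when it asserts that the elements $H_{\beta_m}-X$, $m\in S_i$, are linearly independent, which fails precisely when $X$ has a nontrivial component in block $i$. Until this discrepancy is resolved, the proof of part (3) is not complete.
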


Combining \cref{mainthm:classification_algebra_decompostions} with the results of \cite{dokovic_check_hee} we get a complete description of all $(m \ge 2)$-regular decompositions of simple Lie algebras $\fg$ over an algebraically closed field of characteristic $0$.

\subsection*{Acknowledgments}
The author is thankful to R.\ Abedin, E. Karolinsky and A.\ Stolin for their helpful advice. \\
The work was supported by DFG -- SFB -- TRR 358/1 2023 -- 491392403.

\section{Preliminaries}
Let $F$ be an algebraically closed field of characteristic $0$ and $\fg$ be a simple Lie algebra over $F$.
Fix a Cartan subalgebra $\fh \subset \fg$. 
Let $\Delta$ be the set of non-zero roots of $\fg$ with respect to $\fh$ and 
\begin{equation*}
\fg = \fh \bigoplus_{\alpha \in \Delta} \fg_\alpha
\end{equation*}
be the corresponding root space decomposition.
We choose a set of simple roots $\pi = \{ \alpha_1, \dots, \alpha_n \}$ and denote by $\Delta_+$ the set of all positive roots.
Furthermore, we fix a basis $\{H_{\alpha_i}, E_{\pm\alpha} \mid 1 \le i \le n, \ \alpha \in \Delta_+\}$ of $\fg$ such that 
\begin{enumerate}
    \item $[E_\alpha, E_{-\alpha}] = H_\alpha = \sum_{i=1}^n c_i H_{\alpha_i}$  for 
    $\alpha = \sum_{i=1}^n c_i \alpha_i$ and
    \item $\kappa(E_\alpha, E_{-\alpha}) = 1$ for the Killing form $\kappa$ on $\fg$.
\end{enumerate} 
For all $1 \le i,j \le n$ we let
\begin{equation*}
    \langle \alpha_i, \alpha_j \rangle \coloneqq
    \alpha_i(H_{\alpha_j}) =
    \alpha_j(H_{\alpha_i})=
    \kf(H_{\alpha_i}, H_{\alpha_j}).
\end{equation*}

In this paper, our goal is to describe \emph{$m$-regular decompositions of $\fg$}, i.e.\ decompositions 
\begin{equation}%
\label{eq:regular_decomposition}
    \fg = \bigoplus_{i = 1}^m \fg_i, \ m \ge 2
\end{equation}
satisfying the following restrictions:
\begin{enumerate}
    \item all $\fg_i$ as well as $\fg_i \oplus \fg_j$ are Lie subalgebras of $\fg$;
    \item each $\fg_i$ has the form $ \fs_i \bigoplus_{\alpha \in \Delta_i} \fg_\alpha$ for some subspace $\fs_i \subseteq \fh$ and some subset $\Delta_i \subseteq \Delta$.
\end{enumerate}
Equivalently, all $\fg_i$ and $\fg_i \oplus \fg_j$ are regular subalgebras of $\fg$, which motivates the name.
When we want to emphasize that exactly $k$ summands in \cref{eq:regular_decomposition} have $\fs_i \neq 0$, we say that it is a \emph{regular decomposition of type $(m,k)$}.

A subset $S \subseteq \Delta$ is called \emph{closed}, if for any $\alpha, \beta \in S$, the inclusion $\alpha + \beta \in \Delta$ implies $\alpha + \beta \in S$.
Decompositions of $\fg$ into two regular subalgebras are closely related to partitions 
$$
\Delta = S_1 \sqcup S_2
$$
of the root system of $\fg$ into disjoint union of two closed subsets.

\begin{proposition}%
\label{prop:2_partititons}
    Given a partition $\Delta = S_1 \sqcup S_2$ into two closed subsets, we can find two (non-unique) subspaces $\fs_1, \fs_2 \subseteq \fh$ such that
    $$
    \fg = \left( \fs_1 \bigoplus_{\alpha \in S_1} \fg_\alpha \right) \oplus 
    \left( \fs_2 \bigoplus_{\alpha \in S_2} \fg_\alpha \right)
    $$ 
    is a regular decomposition.
    Conversely, by forgetting the Cartan part of a $2$-regular decomposition of $\fg$ we obtain a partition of $\Delta$ into two closed subsets.
\end{proposition}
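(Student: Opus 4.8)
The plan is to prove the converse direction first, since it is essentially immediate, and then construct the Cartan subspaces $\fs_1, \fs_2$ in the forward direction. For the converse: suppose $\fg = \fg_1 \oplus \fg_2$ is a $2$-regular decomposition, so $\fg_i = \fs_i \oplus \bigoplus_{\alpha \in \Delta_i} \fg_\alpha$. Since the $\fg_i$ span $\fg$ and intersect trivially, and the root spaces $\fg_\alpha$ are one-dimensional, each $\alpha \in \Delta$ lies in exactly one of $\Delta_1, \Delta_2$ (a root space cannot be split between the two summands as both are sums of whole root spaces plus a Cartan piece, and $\fh \cap \fg_\alpha = 0$). Hence $\Delta = \Delta_1 \sqcup \Delta_2$ is a genuine partition. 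Closedness of each $\Delta_i$ is forced by $[\fg_\alpha, \fg_\beta] = \fg_{\alpha+\beta}$ whenever $\alpha+\beta \in \Delta$: if $\alpha, \beta \in \Delta_i$ with $\alpha + \beta \in \Delta$, then $\fg_{\alpha+\beta} = [\fg_\alpha, \fg_\beta] \subseteq [\fg_i, \fg_i] \subseteq \fg_i$, so $\alpha + \beta \in \Delta_i$.

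For the forward direction, start from a partition $\Delta = S_1 \sqcup S_2$ into closed subsets and set $\fm_i \coloneqq \bigoplus_{\alpha \in S_i} \fg_\alpha$; closedness of $S_i$ gives $[\fm_i, \fm_i] \subseteq \fm_i \oplus \fh$, more precisely the $\fh$-component of $[\fm_i,\fm_i]$ is spanned by $\{H_\alpha \mid \alpha, -\alpha \in S_i\}$. The task is to choose $\fs_1, \fs_2 \subseteq \fh$ with $\fh = \fs_1 \oplus \fs_2$ such that, writing $\fg_i = \fs_i \oplus \fm_i$, each $\fg_i$ is closed under bracket and $\fg_1 \oplus \fg_2$ is as well — but the latter is automatic since $\fg_1 \oplus \fg_2 = \fg$. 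So only two conditions remain: (a) $[\fg_i, \fg_i] \subseteq \fg_i$ for $i = 1,2$, and (b) $\fh = \fs_1 \oplus \fs_2$. Condition (a) unwinds to: $[\fs_i, \fm_i] \subseteq \fm_i$ (always true, since $[\fh, \fg_\alpha] = \fg_\alpha$), and the $\fh$-part of $[\fm_i, \fm_i]$ must lie in $\fs_i$, i.e. $\fs_i$ must contain $\fh_i \coloneqq \mathrm{span}_F\{H_\alpha \mid \alpha \in S_i,\ -\alpha \in S_i\}$.

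So the proof reduces to a linear-algebra claim: given subspaces $\fh_1, \fh_2 \subseteq \fh$ (the ``forced'' Cartan parts), one can find a direct sum decomposition $\fh = \fs_1 \oplus \fs_2$ with $\fh_i \subseteq \fs_i$. The key observation making this possible is that $\fh_1 \cap \fh_2 = 0$: if $H \in \fh_1 \cap \fh_2$ is nonzero, then $H = \sum c_\alpha H_\alpha$ over $\alpha$ with $\pm\alpha \in S_1$, and also a combination over $\beta$ with $\pm\beta \in S_2$; pairing against $H$ with the Killing form and using that the $H_\alpha$ in the span of $\fh_1$ are orthogonal (under $\kappa$) to those in $\fh_2$ when... — here I need to be a little careful: orthogonality of $\fh_1$ and $\fh_2$ is not obvious. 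Instead, the cleaner route is to note that $\fh_1 \cap \fh_2 = 0$ because $H \in \fh_i$ implies $\alpha(H) = 0$ for all $\alpha \in S_j \setminus (-S_j)$ with $j \ne i$... Actually the robust argument: the span of all $H_\alpha$, $\alpha \in \Delta$, is all of $\fh$, and $\fh_1 + \fh_2 \subsetneq \fh$ in general with $\fh_1 \cap \fh_2 = 0$ following from the fact that a root $\alpha$ with $\pm\alpha \in S_1$ and a root $\beta$ with $\pm\beta \in S_2$ cannot coincide. Once $\fh_1 \cap \fh_2 = 0$ is in hand, extend $\fh_1 \oplus \fh_2$ to $\fh$ by choosing any complement $\fc$, and split $\fc = \fc_1 \oplus \fc_2$ arbitrarily, setting $\fs_i = \fh_i \oplus \fc_i$. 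Then $\fh = \fs_1 \oplus \fs_2$ and $\fh_i \subseteq \fs_i$, so $\fg_i = \fs_i \oplus \fm_i$ is a regular subalgebra and $\fg = \fg_1 \oplus \fg_2$ is the desired $2$-regular decomposition; the non-uniqueness is exactly the freedom in choosing $\fc$ and its splitting.

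The main obstacle is the linear-algebra step, specifically verifying $\fh_1 \cap \fh_2 = 0$ cleanly. The subtle point is that $\fh_i$ is defined via the symmetric part $S_i \cap (-S_i)$ of $S_i$, and one must rule out nontrivial $F$-linear relations mixing $H_\alpha$'s coming from $S_1$-roots with $H_\beta$'s coming from $S_2$-roots. I expect this follows from a short argument using that $\{\alpha \in \Delta : \pm\alpha \in S_i\}$ is itself a (closed, symmetric) subsystem $\Psi_i$ of $\Delta$, that $\Psi_1 \cap \Psi_2 = \emptyset$, and that the $\mathbb{Q}$-span of $\Psi_1$ inside $\fh^*$ meets the $\mathbb{Q}$-span of $\Psi_2$ only in... — no, that is false in general (e.g. two orthogonal $A_1$'s in $A_2$ do not exist, but in $D_2$...). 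The cleanest fix: do not claim $\fh_1 \cap \fh_2 = 0$; instead directly take $\fs_1 = \fh_1$ together with an arbitrary complement of $\fh_1 + \fh_2$, and $\fs_2 =$ a complement of $\fs_1$ in $\fh$ containing $\fh_2$ — this last step needs $\fh_2 \cap \fs_1 = \fh_2 \cap \fh_1 = 0$ again, so we are back to the same point. I therefore expect the honest argument to require showing $\fh_1\cap\fh_2 = 0$, which I would do by proving $\kappa|_{\fh_1 \times \fh_2} = 0$ is \emph{not} needed; rather, pick $H \in \fh_1 \cap \fh_2$, note $\kappa(H, H_\gamma)$ is computable for all $\gamma$, and use that $H \in \fh_1$ forces $H$ to pair to $0$ with every $H_\gamma$ for $\gamma \in S_2 \setminus (-S_2)$ (since such $\gamma$ cannot be built from $S_1$-symmetric roots — this is where closedness of the partition enters), together with the analogous statement with indices swapped, forcing $\kappa(H,H) = 0$ and hence $H = 0$ by non-degeneracy of $\kappa|_\fh$.
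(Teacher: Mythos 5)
Your reduction of the forward direction is correct and is essentially the paper's: the bracket conditions force $\fs_i \supseteq \fh_i \coloneqq \mathrm{span}_F\{H_\alpha \mid \alpha \in S_i \cap (-S_i)\}$, and everything comes down to showing $\fh_1 \cap \fh_2 = 0$ so that the remaining Cartan directions can be distributed arbitrarily. (The converse direction is fine; the paper dismisses it as obvious.) The problem is that you never actually prove $\fh_1 \cap \fh_2 = 0$, and the route you finally sketch does not work. You propose to show that $H \in \fh_1$ pairs to zero under $\kappa$ with every $H_\gamma$ for $\gamma \in S_2 \setminus (-S_2)$. That claim is false in general: in $A_2$ take $S_1 = \{\alpha, \alpha+\beta\}$ and $S_2 = \{\beta, -\beta, -\alpha, -\alpha-\beta\}$; both are closed, $S_2 \cap (-S_2) = \{\pm\beta\}$, and $H_\beta$ is certainly not $\kappa$-orthogonal to $H_\alpha$ with $\alpha \in S_1 \setminus(-S_1)$. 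It is also the wrong pairing to aim for: to get $\kappa(H,H)=0$ for $H \in \fh_1 \cap \fh_2$ you need $H$ to be orthogonal to the $H_\beta$ with $\beta$ in the \emph{symmetric} part of $S_2$, not the antisymmetric part.

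The fact you need — and the one the paper uses, citing Lemma~2 of the \v{D}okovi\'c--Cheng--Hou paper — is that the two symmetric parts $S_1^s = S_1 \cap (-S_1)$ and $S_2^s = S_2 \cap (-S_2)$ are \emph{strongly orthogonal}. The proof is short and uses closedness exactly where you suspected it should enter: if $\alpha \in S_1^s$ and $\beta \in S_2^s$ are not orthogonal (they cannot be proportional since $S_1, S_2$ are disjoint), then one of $\alpha \pm \beta$ is a root, say $\alpha + \beta \in \Delta = S_1 \sqcup S_2$; if $\alpha+\beta \in S_1$ then $\beta = (\alpha+\beta) + (-\alpha) \in S_1$ since $-\alpha \in S_1$, and if $\alpha+\beta \in S_2$ then $\alpha = (\alpha+\beta)+(-\beta) \in S_2$ — either way a contradiction. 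Strong orthogonality gives $\kappa(\fh_1, \fh_2) = 0$, and since $\kappa$ is positive definite on the rational span of the coroots, its restriction to each $\fh_i$ is non-degenerate, whence $\fh_1 \cap \fh_2 \subseteq \fh_1 \cap \fh_1^{\perp} = 0$. Note that your appeal to ``non-degeneracy of $\kappa|_{\fh}$'' alone is also insufficient over an algebraically closed field, where $\kappa(H,H)=0$ does not force $H=0$; you need non-degeneracy on the subspace $\fh_1$ itself, which is what the positive definiteness on the rational form supplies.
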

\begin{proof}
    For a given partition $\Delta = S_1 \sqcup S_2$, $S_i \neq \emptyset$, we let 
    $$
    S_i^s \coloneqq \{ \alpha \in S_i \mid \alpha, -\alpha \in S_i \} = S_i \cap (-S_i)
    $$ 
    be the symmetric parts of $S_i$. 
    Consider the following two subalgebras of $\fg$:
    $$
    \fg_i^s \coloneqq \langle E_\alpha \mid \alpha \in S_i^s \rangle.
    $$
    By \cite[Lemma 2]{dokovic_check_hee}
    the sets $S_1^s$ and $S_2^s$ are strongly orthogonal. In particular, this means that 
    $$\fg_1^s \cap \fg_2^s = 0.$$
    

    Let $S_i^a \coloneqq S_i \setminus S_i^s$ be the anti-symmetric parts of $S_i$. Consider the following mutually disjoint regular subalgebras of $\fg$:
    $$
    \fg_i' \coloneqq \fg_i^s \oplus \textnormal{span}_F 
    \{ E_\alpha \mid \alpha \in S_i^a\}.
    $$
    Their sum is $\fg$ with a missing Cartan part $\fh'$.
    Indeed, if the sum is the whole $\fg$, then the sets $S_i^s$ are not mutually orthogonal or one of $S_i$ is empty. 
    This implies that
    $$
    \fg = (\fh' \oplus \fg'_1) \oplus \fg'_2
    $$
    is a desired regular decomposition of $\fg$.
    
    Note that the missing part $\fh'$ can be chosen and distributed among $\fg_1'$ and $\fg_2'$ completely arbitrary.
    The converse direction is obvious.
\end{proof}

The classification, up to the action of the Weyl group $W(\Delta)$, of all closed subsets $S \subset \Delta$, such that $\Delta \setminus S$ is also closed, was obtained in \cite{dokovic_check_hee}. 
Combining this result with
\cref{prop:2_partititons} and its proof, we get a description of all $2$-regular decompositions of $\fg$ up to the action of $W(\Delta)$ and arbitrary distribution of a missing Cartan part $\fh'$ into $\fg_1^s$ and $\fg_2^s$.
Consequently, in this paper we concentrate on $m$-regular decompositions with $m \ge 3$.

By analogy with $m$-regular decompositions of $\fg$ we define an \emph{$m$-regular partition of the finite root system $\Delta$} as a partition of the form
$$
    \Delta = \bigsqcup_{i=1}^m \Delta_i
$$
with the property that all $\Delta_i$ and $\Delta_i \sqcup \Delta_j$ are closed.

It is tempting to assume a general version of \cref{prop:2_partititons}, namely, that any $m$-regular partition of $\Delta$ gives rise to an $m$-regular decomposition of $\fg$.
This statement turns out to be true.
We prove it in the following chapters and use it to reduce the classification problem of decompositions of $\fg$ to the question of regular partitions of $\Delta$.

\section{Regular partitions of finite root system}
Let $\fg$ be a simple Lie algebra over an algebraically closed field $F$ of 0 characteristic.
As before, the irreducible root system of $\fg$ is denoted by $\Delta$.
We let $n$ be the rank of $\Delta$.

\subsection{Graph $G$ associated with an $m$-regular partition}
Consider an arbitrary $m$-regular partition
\begin{equation}%
\label{eq:fixed_m_partition}
    \Delta = \bigsqcup_{i=1}^m \Delta_i, \ m \ge 3.
\end{equation}
Given an (integral) basis $\{ \beta_i \}_{1}^n \subseteq \Delta$ of the root system, we can associate the partition
\cref{eq:fixed_m_partition} with 
a labeled multigraph $G$ on $m$ vertices with loops:

\begin{itemize}
    \item Vertices of the graph are integers $1, 2, \dots, m$;
    \item There is an edge between vertices $1 \le i \neq j \le m$
    when we can find $1 \le \ell \le n$ such that $\beta_\ell \in \Delta_i$ and $-\beta_{\ell} \in \Delta_j$. We label that edge with $\beta_\ell$ and equip its ends with $+$ and $-$ signs, to remember how the roots are distributed (see \cref{ex:easy_example_graph});
    \item We endow the vertex $i$ with a loop $\beta_\ell$ if $\pm \beta_\ell \in \Delta_i$.
\end{itemize}
To avoid ambiguity in the sequel, we never use the word edge to refer to a loop.
We sometimes omit certain labels of the graph when they are not important.

Note that the graph $G$ does depend on the choice of a basis $\{ \beta_i \}$.
Later, by selecting specific bases, we obtain significant constraints on the associated graph.

\begin{example}%
\label{ex:easy_example_graph}
Consider the $3$-partition of $A_3$
\begin{equation*}
\begin{aligned}
    &\{-\beta, -\beta - \gamma, \alpha \} \\
    & \{-\gamma, \beta, \alpha + \beta \} \\
    & \{\gamma, \beta + \gamma, \alpha + \beta + \gamma, -\alpha - \beta - \gamma, -\alpha, -\alpha - \beta \} 
\end{aligned}
\end{equation*}
The graph of this partition with respect to the basis $\{\alpha, \alpha+\beta, \alpha+\beta+\gamma \}$ is presented in \cref{fig:3_partition}.
\end{example}
\begin{figure}[H]
    \centering
    \includegraphics[scale = 0.7]{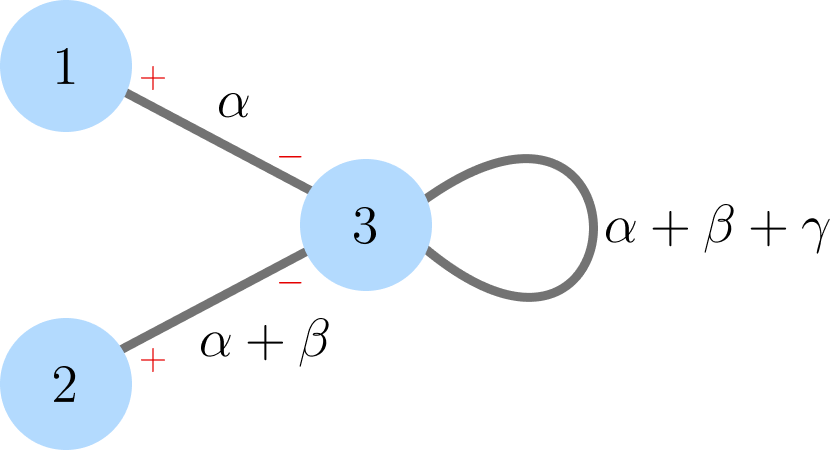}
    \caption{Graph of a 3-partition.}
    \label{fig:3_partition}
\end{figure}
\noindent 
The properties of such a graph depend on the type of a root system $\Delta$.
For this reason, we break our considerations into several subsections.

\subsubsection{Type $A_n$}
Let $\pi = \{\alpha_1, \alpha_2, \dots, \alpha_n \}$ be a set of symple roots of $\Delta$.
We define the sums
\begin{align*}
    \beta_i \coloneqq \alpha_1 + \alpha_2 + \dots + \alpha_i, \ 1 \le i \le n.
\end{align*}
It is not hard to see, that the set $\pi$ can be re-ordered in such a way, that all $\beta_i$ as well as all their differences $\beta_i - \beta_j$, $1 \le i \neq j \le n$, are roots.
More precisely, there are two such orderings.
If simple roots of $A_n$ correspond to vertices of the Dynkin diagram as shown in \cref{fig:An_string},
then the desired orderings are
$\{\alpha_1, \dots, \alpha_n\}$ and $\{\alpha_n, \dots, \alpha_1\}$.
Without loss of generality, we choose the first one.
\begin{figure}[H]
    \centering
    \includegraphics[scale = 0.7]{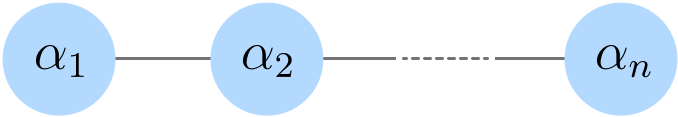}
    \caption{Dynkin diagram for $A_n$.}
    \label{fig:An_string}
\end{figure}

The set $\{\beta_i \}_1^n$ is a basis for $\Delta$. 
Consider the graph $G$ of an $m$-regular partition of $\Delta$ with respect to this basis

\paragraph{Property A1.}
The graph $G$ can have at most one vertex with loops.

\begin{proof}
Assume the opposite. Then we can find two indices $1 \le i \neq j \le n$, such that $\pm \beta_i \in \Delta_1$ and $\pm \beta_j \in \Delta_2$. 
Since we started with a regular splitting, we must have $\pm (\beta_i - \beta_j) \in \Delta_1 \sqcup \Delta_2$.
If $\beta_i - \beta_j \in \Delta_1$, then $-\beta_j \in \Delta_1$ which is impossible because by our assumption $-\beta_j \in \Delta_2$.
Similarly, placing $\beta_i - \beta_j$ into $\Delta_2$ leads to the contradiction $\beta_i \in \Delta_2$.
In other words, there cannot be two vertices with loops.
\end{proof}

\paragraph{Property A2.}
Each vertex $i$ either has a loop $\beta_\ell$ or there is an edge
\begin{tikzcd}
i \arrow[r, "\beta_\ell", no head] & j
\end{tikzcd}
for some $j \neq i$.

\begin{proof}
    Assume a vertex $i$ has no loops and is not connected to any other vertex.
    By definition of $G$ this means that all $\pm \beta_\ell$, $1 \le \ell \le n$, are contained in $\sqcup_{j \neq i} \Delta_j$.
    Since any root can be written as a linear (integer) combination of certain $\beta_\ell$, we get the equality $\Delta = \sqcup_{j \neq i} \Delta_j$,
    raising the contradiction $\Delta_i = \emptyset$. 
\end{proof}

\paragraph{Property A3.}
Two edges incident to a single vertex have the same signs at that vertex.

\begin{proof}
    Assume our graph contains a subgraph of the form
    \begin{figure}[H]
    \centering
    \includegraphics[scale = 0.7]{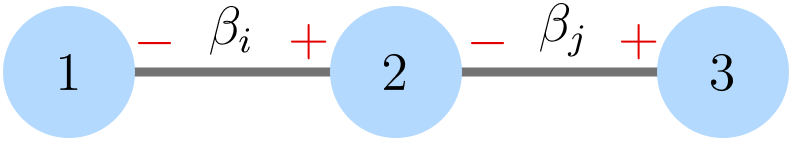}
\end{figure}
\noindent Then we know that $\beta_j - \beta_i \in \Delta_1 \sqcup \Delta_3$.
If $\beta_j - \beta_i \in \Delta_1$, then $$\Delta_3 \ni \beta_j = (\beta_j - \beta_i) + \beta_i \in \Delta_1 \sqcup \Delta_2 $$
which is absurd.
Similarly, if $\beta_j - \beta_i \in \Delta_3$, then $-\beta_i$ must simultaneously lie in $\Delta_1$ and $\Delta_2 \sqcup \Delta_3$.
\end{proof}

\paragraph{Property A4.}
Any two edges in $G$ share a common vertex.

\begin{proof}
If we have two edges
\begin{figure}[H]
    \centering
    \includegraphics[scale = 0.7]{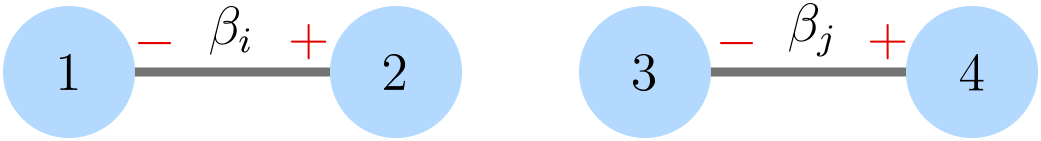}
\end{figure}
\noindent then, by identifying vertices $2$ and $3$ (equivalently, replacing $\Delta_2 \sqcup \Delta_3$ with a single closed subset $\Delta_{23}$) and repeating word-by-word the proof of Property A3, we get a contradiction.
\end{proof}

\paragraph{Property A5.}
A loop in $G$ can occur only in the vertex common to all edges.

\begin{proof}
Assume $G$ has a subgraph of the form
\begin{figure}[H]
    \centering
    \includegraphics[scale = 0.7]{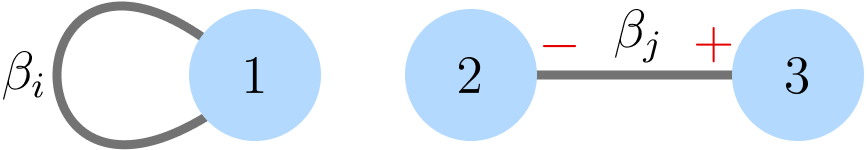}
\end{figure}
\noindent We then have the containment
$\beta_i - \beta_j \in \Delta_1 \sqcup \Delta_2$.
The first containment $\beta_i - \beta_j \in \Delta_1$ leads to the contradiction $- \beta_j \in \Delta_1$.
In the case $\beta_i - \beta_j \in \Delta_2$, we get $\beta_i \in \Delta_2 \sqcup \Delta_3$ which is again impossible.
Consequently, if $G$ contains a loop, then it must be at a vertex common to all the edges.
\end{proof}

Combining all the preceding results, we see that the graph $G$, associated to a regular partition \cref{eq:fixed_m_partition}, up to re-numbering of sets $\Delta_i$ and swapping the sign labels, is necessarily of the form
\begin{figure}[H]
    \centering
    \includegraphics[scale = 0.7]{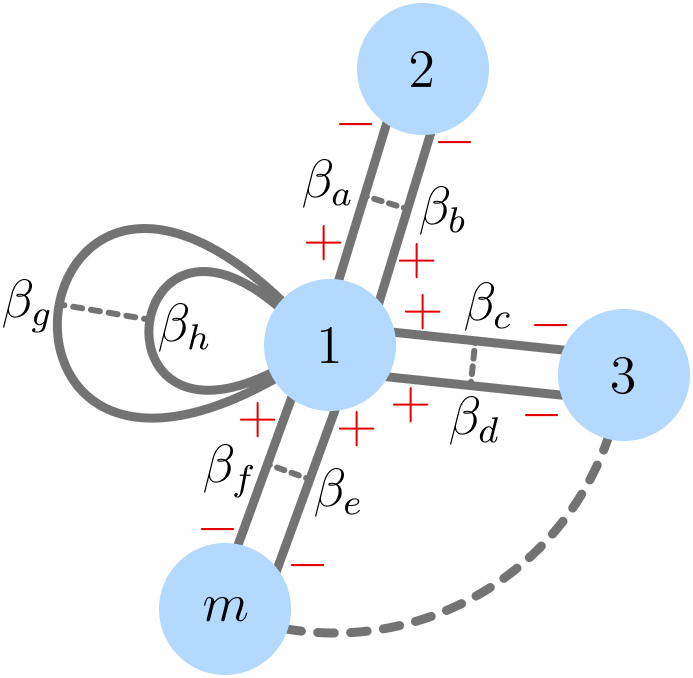}
    \caption{Graph $G$ of an arbitrary $m$-regular partition with $m \ge 3$.}
    \label{fig:graph_partition_A}
\end{figure}
\noindent The total number of loops and edges is clearly equal to the rank $n$ of the root system and $m \le n+1$.

\begin{theorem}%
\label{thm:graph_partition_A}
A labeled multigraph $G$ of the form presented in \cref{fig:graph_partition_A} determines uniquely an $m$-regular partition of $A_n$ with $m \ge 3$.
\end{theorem}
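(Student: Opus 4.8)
The plan is to upgrade the vague assertion ``$G$ determines the partition'' into an explicit reconstruction formula, using the standard coordinate model of $A_n$. Realise $\Delta$ inside $\bR^{n+1}$ with standard basis $e_0,\dots,e_n$ so that $\alpha_i=e_i-e_{i-1}$ and hence $\beta_i=\alpha_1+\dots+\alpha_i=e_i-e_0$ for $1\le i\le n$; put $\beta_0\coloneqq 0=e_0-e_0$. Then $\Delta=\{e_j-e_i\mid 0\le i\neq j\le n\}$ and the fixed integral basis is $\{\beta_1,\dots,\beta_n\}$. By Properties~A1--A5 the graph $G$ of \cref{fig:graph_partition_A} is a star; after re-numbering the parts $\Delta_i$ and, if necessary, replacing the partition by $\{-\Delta_i\}$, let vertex $1$ be its centre and let every edge carry a $+$ at vertex $1$. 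The labelled data of $G$ then amounts to a map $\psi\colon\{1,\dots,n\}\to\{1,\dots,m\}$, where $\psi(\ell)$ is the vertex at the non-$+$ end of the loop or edge labelled $\beta_\ell$, whose image contains $\{2,\dots,m\}$ by Property~A2; equivalently, a set partition $\{0,1,\dots,n\}=B_1\sqcup\dots\sqcup B_m$ with $0\in B_1$, where $B_1=\{0\}\cup\psi^{-1}(1)$ and $B_j=\psi^{-1}(j)$ for $j\ge 2$. I will show that the unique $m$-regular partition realising $G$ is
\[
  \Delta_i=\{\,e_j-e_a\mid a\in B_i,\ 0\le j\le n,\ j\neq a\,\},\qquad 1\le i\le m,
\]
``the roots whose second index lies in $B_i$''.

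First I would check that this is an $m$-regular partition with graph $G$. Disjointness and $\bigsqcup_i\Delta_i=\Delta$ are immediate, since every root $e_j-e_a$ has a well-defined second index $a$ lying in exactly one block, and each $B_i$ is non-empty ($0\in B_1$, while $B_j\neq\emptyset$ for $j\ge 2$ because vertex $j$ carries an edge, by Property~A2). Closedness rests on the elementary fact that a sum $(e_j-e_a)+(e_l-e_k)$ is a root only when $j=k$ (the sum being $e_l-e_a$) or $l=a$ (the sum being $e_j-e_k$); in either case its second index is the second index of one of the summands, so each $\Delta_i$ is closed, and since $\Delta_i\sqcup\Delta_j=\{e_l-e_a\mid a\in B_i\cup B_j\}$ has the same shape, each such pair is closed as well. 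Finally $\beta_\ell=e_\ell-e_0$ has second index $0\in B_1$, so $\beta_\ell\in\Delta_1$, while $-\beta_\ell=e_0-e_\ell$ has second index $\ell\in B_{\psi(\ell)}$, so $-\beta_\ell\in\Delta_{\psi(\ell)}$; hence $\beta_\ell$ is a loop at $1$ exactly when $\psi(\ell)=1$ and otherwise an edge between $1$ and $\psi(\ell)$ with $+$ at $1$, i.e.\ the graph of $\{\Delta_i\}$ is precisely $G$.

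The substance of the proof is uniqueness. Let $\Delta=\bigsqcup_i\Delta_i'$ be any $m$-regular partition with graph $G$, and let $\phi\colon\{0,\dots,n\}\to\{1,\dots,m\}$ send an index to its block, so $\phi(0)=1$ and $\phi(\ell)=\psi(\ell)$. I claim $e_j-e_i\in\Delta'_{\phi(i)}$ for all $i\neq j$; this identifies $\Delta_i'$ with $\Delta_i$ and finishes the theorem. I would prove the claim in layers, using only that each $\Delta_k'$ and each $\Delta_k'\sqcup\Delta_l'$ is closed. (i)~If $i=0$ or $j=0$ the root is $\pm\beta_\ell$ and the claim is exactly what $G$ records. (ii)~If $i\in B_1$ with $i\ge 1$ then $\beta_i$ is a loop, so $\pm\beta_i\in\Delta_1'$, and $e_j-e_i=\beta_j+(-\beta_i)\in\Delta_1'$ since $\beta_j\in\Delta_1'$ and $\Delta_1'$ is closed. (iii)~If $\phi(i)=a\ge 2$ and $\phi(j)\neq a$, then $e_j-e_i=\beta_j+(-\beta_i)$ with $\beta_j\in\Delta_1'$ and $-\beta_i\in\Delta_a'$ (an edge, not a loop) forces $e_j-e_i\in\Delta_1'\cup\Delta_a'$; were it in $\Delta_1'$, then $-\beta_i=(e_j-e_i)+(-\beta_j)$ would be a sum of a root of $\Delta_1'$ and a root of $\Delta'_{\phi(j)}$, hence (by closedness of $\Delta_1'$ if $\phi(j)=1$, of $\Delta_1'\sqcup\Delta'_{\phi(j)}$ otherwise) would lie in $\Delta_1'\cup\Delta'_{\phi(j)}$, contradicting $-\beta_i\in\Delta_a'$ because $a\notin\{1,\phi(j)\}$. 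So $e_j-e_i\in\Delta_a'$.

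The one case this does not settle --- and where I expect the real work, being also the only place $m\ge 3$ is used --- is when $i$ and $j$ lie in the same non-central block $B_a$, $a\ge 2$: the routing $e_j-e_i=\beta_j+(-\beta_i)$ only yields $e_j-e_i\in\Delta_1'\cup\Delta_a'$, and no manipulation of the four roots $\pm\beta_i,\pm\beta_j$ alone removes the spurious term $\Delta_1'$. The fix is to detour through a third block: choose $b\notin\{1,a\}$ (possible since $m\ge 3$) and $k\in B_b$; the pairs $(i,k)$ and $(k,j)$ fall under case~(iii), so $e_k-e_i\in\Delta_a'$ and $e_j-e_k\in\Delta_b'$, whence $e_j-e_i=(e_j-e_k)+(e_k-e_i)\in\Delta_a'\cup\Delta_b'$, and intersecting with $\Delta_1'\cup\Delta_a'$ gives $e_j-e_i\in\Delta_a'$, as needed. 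This completes the induction and hence the uniqueness. So the main obstacle is isolated in this last case; everything else is bookkeeping of signs and of the closedness axioms.
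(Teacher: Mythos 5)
Your proof is correct and follows essentially the same route as the paper: the case analysis (loop at $i$; $i$ and $j$ in different non-central blocks; $i$ and $j$ in the same non-central block, resolved by detouring through a third block using $m\ge 3$) matches the paper's four cases one-for-one. The only difference is cosmetic but welcome: by working in the coordinate model $e_j-e_i$ you also verify explicitly that the reconstructed partition is closed and realises $G$, a point the paper dispatches with ``closed by construction.''
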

\begin{proof}
The graph $G$ completely describes where the roots $\pm \beta_i$, $1 \le i \le n$, live. 
Therefore, it is enough to show that the remaining roots can be distributed among $\Delta_i$'s in a unique way.

Take a root $\alpha \in \Delta \setminus \{ \beta_i \}_1^n$ and write it as a difference $\beta_j - \beta_i$.
We then have the following cases:

\begin{enumerate}
    \item $\beta_i$ is a loop and hence the roots $\beta_j, -\beta_i, \alpha \in \Delta_1$;
    \item $\beta_i$ is not a loop and hence we can assume without loss of generality, that $-\beta_i \in \Delta_2$. Suppose $\beta_j$ is a loop. Then $\alpha = \beta_j - \beta_i \in \Delta_1 \sqcup \Delta_2$.
    The containment $\alpha \in \Delta_1$ leads to the contradiction $-\beta_i \in \Delta_1$. Consequently, $\alpha \in \Delta_2$;
    \item $-\beta_i \in \Delta_2$, $-\beta_j \in \Delta_3$ and $\alpha \in \Delta_1 \sqcup \Delta_2$. 
    Again, $\alpha \in \Delta_1$ leads to the false statement
    $$
    \Delta_1 \sqcup \Delta_3 \ni (\beta_j - \beta_i) - \beta_j = -\beta_i \in \Delta_2.
    $$
    Therefore, $\alpha \in \Delta_2$;
    \item The final case is when both $-\beta_i, -\beta_j \in \Delta_2$ and $\alpha \in \Delta_1 \sqcup \Delta_2$. Since $m \ge 3$, there must be another index $1 \le k \le n$ such that $-\beta_k \in \Delta_3$.
    By 3.\ above we must have $\beta_k - \beta_i \in \Delta_2$ and $\beta_j - \beta_k \in \Delta_3$. 
    If $\alpha \in \Delta_1$, then
    $$
    \Delta_2 \sqcup \Delta_3 \ni (\beta_k - \beta_i) + (\beta_j - \beta_k) = \beta_j - \beta_i \in \Delta_1,
    $$
    which is impossible.
    In other words, we have $\alpha \in \Delta_2$.
\end{enumerate}
\noindent This argument gives a unique partition of $A_n$ into $m$ subsets.
These subsets as well as their unions are closed by construction, providing us with the desired $m$-regular partition.
\end{proof}

It now follows, that the graph of an $(n+1)$-regular partition must have one of the forms presented in \cref{fig:maximal_partition_A}.
\begin{figure}[h]
    \centering
    \includegraphics[scale = 0.7]{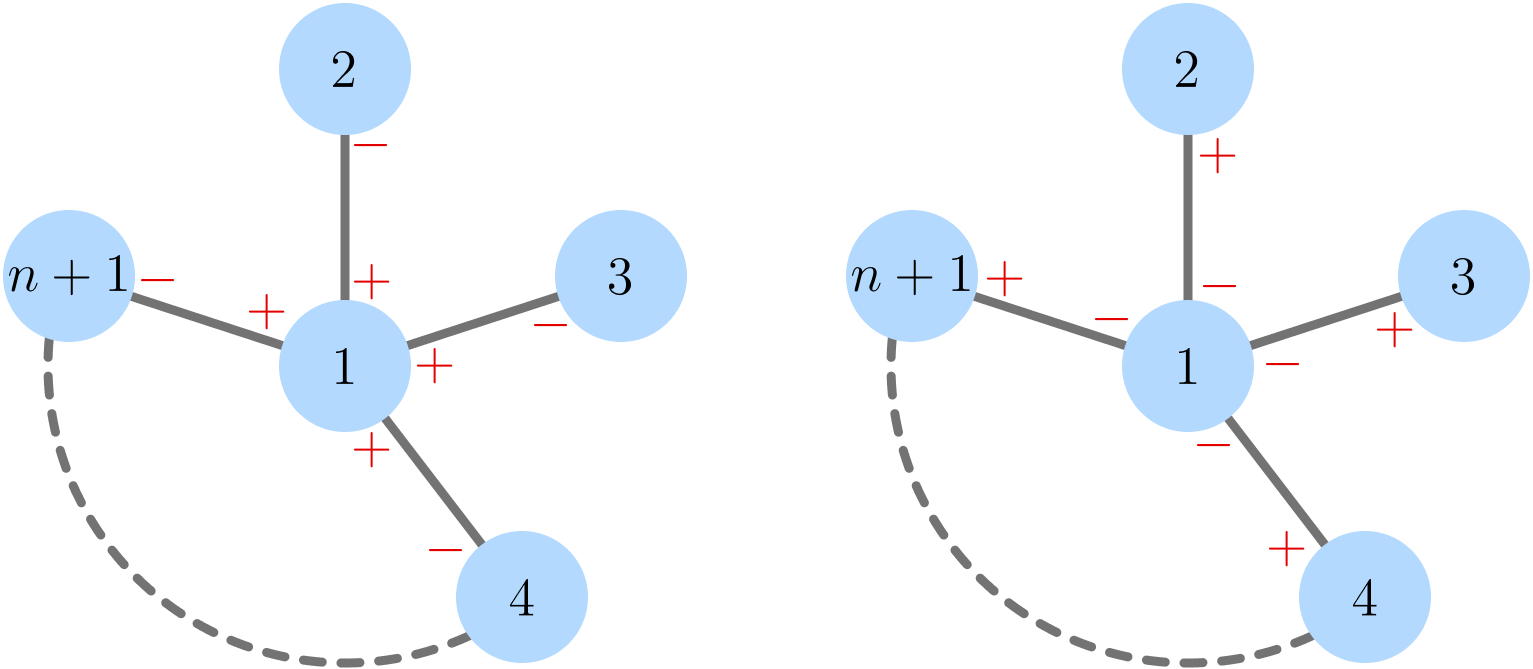}
    \caption{Graphs of two maximal regular partitions.}
    \label{fig:maximal_partition_A}
\end{figure}
\noindent Explicitly, up to re-numbering of $\Delta_i$'s these partitions are given by
\begin{equation}%
\label{eq:finest_An}
\renewcommand{\arraystretch}{1.2}
\begin{tabular}{ l c l }
 $\Delta_0 = \{\beta_i \mid 1 \le i \le n\}$ & and & $\Delta_j = \{-\beta_j, -\beta_j + \beta_i \mid 1 \le i \neq j \le n \},$ \\ 
 $\Delta_0 = \{-\beta_i \mid 1 \le i \le n\}$ & and &
 $\Delta_j = \{\beta_j, \beta_j - \beta_i \mid 1 \le i \neq j \le n \},$
\end{tabular}
\end{equation}
where  $1 \le j \le n$. 
Setting $\beta_0 \coloneqq 0$, we can write $$\Delta_i = \pm \{-\beta_i + \beta_j \mid 0 \le i \neq j \le n \}, \ \ 0 \le i \le n.$$

\begin{remark}%
\label{rem:change_of_order}
If we let
$\overleftarrow{\beta_k} \coloneqq \alpha_{n} + \dots + \alpha_{n-k+1}$ and $\overleftarrow{\beta_0} \coloneqq 0$, then
$$
\beta_j - \beta_i = -\overleftarrow{\beta_{n-j}} + \overleftarrow{\beta_{n-i}}
$$
for all $1 \le i \neq j \le n$.
Therefore, choosing the opposite sign-labels is equivalent to choosing the opposite order of the roots.
\end{remark}

\begin{remark}
    The finest partitions \cref{eq:finest_An} have another simple interpretation. 
    Viewing $\mathfrak{sl}(n+1,F)$ as the Lie algebra of traceless $(n+1)\times (n+1)$-matrices, the set $\Delta_k$ from the first partition corresponds to the subalgebra generated by root vectors $E_{k+1, j}$, $1 \le j \neq k+1 \le n+1$, i.e.\ root vectors ``in row $k+1$''.
    Similarly, the set $\Delta_k$ from the second finest partition corresponds to the root vectors ``in column $k+1$''.
    \begin{figure}[h]
    \centering
    \includegraphics[scale = 0.7]{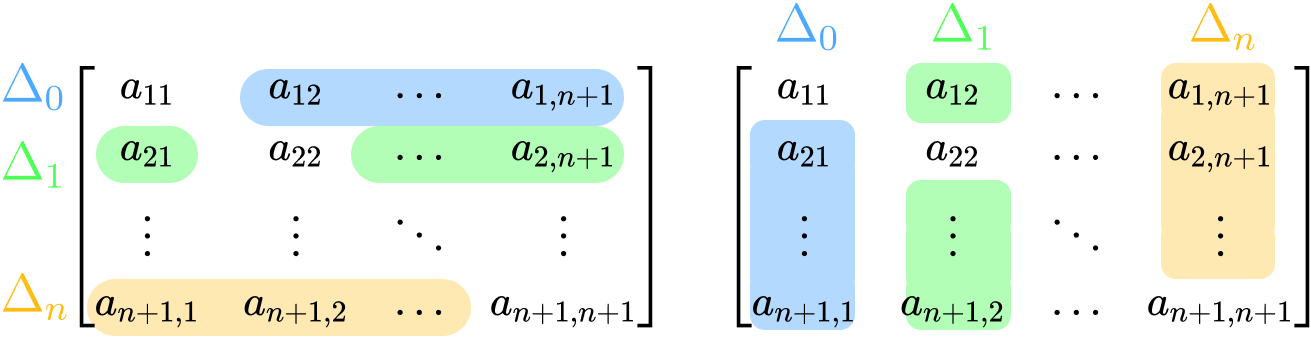}
\end{figure}
For that reason, partitions \cref{eq:finest_An} are called \emph{row} and \emph{column partitions} respectively.
\end{remark}

Replacing two elements $\Delta_i$ and $\Delta_j$ of an $m$-regular partition with a single subset $\Delta_i \sqcup \Delta_j$ is equivalent to identifying vertices $i$ and $j$ of the corresponding graph $G$. 
When one of the indices is equal to $1$, this operation creates loops. 
In other cases it produces multiple edges.
Since any graph of the form \cref{fig:graph_partition_A} can be obtained from graphs \cref{fig:maximal_partition_A} by performing such identifications, we get the following result.
\begin{corollary}%
\label{cor:any_decomposition_extends}
    Any regular partition of $A_n$ into $m \ge 3$ parts can be obtained from one of the finest partitions \cref{eq:finest_An} by replacing several $\Delta_i$'s with their disjoint unions.
\end{corollary}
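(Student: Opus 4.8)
The plan is to read off the statement directly from the graph-theoretic classification already established, using the bijection between regular partitions and their labeled multigraphs. By \cref{thm:graph_partition_A}, any $m$-regular partition of $A_n$ with $m \ge 3$ corresponds bijectively (once an admissible basis $\{\beta_i\}$ is fixed) to a labeled multigraph $G$ of the form displayed in \cref{fig:graph_partition_A}: a single ``hub'' vertex, all edges and loops incident to it, consistent sign labels. Conversely, each finest partition \cref{eq:finest_An} has graph of one of the two shapes in \cref{fig:maximal_partition_A} — namely the hub vertex carrying no loop, joined by a single edge (all with the same sign orientation) to each of the other $n$ vertices. So the problem reduces to a purely combinatorial claim about graphs: every graph of the form \cref{fig:graph_partition_A} is obtained from one of the two maximal graphs by a sequence of vertex identifications.

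First I would recall the observation already made in the paragraph preceding the corollary: identifying vertices $i$ and $j$ of $G$ corresponds exactly to replacing the two partition blocks $\Delta_i, \Delta_j$ by their union $\Delta_i \sqcup \Delta_j$, which is again closed (it is a block of a coarser $(m-1)$-regular partition), and this is well-defined because unions of blocks of a regular partition are closed. Next I would argue that any graph $G$ as in \cref{fig:graph_partition_A} is a quotient of a maximal graph: let the hub of $G$ be vertex $1$, let it carry $p \ge 0$ loops and have edges to the non-hub vertices with multiplicities $m_2, \dots, m_m \ge 1$, so that $p + \sum_{t} m_t = n$. Pick the maximal graph of the matching sign type (the one where the hub carries loops resp. no loops — note by Property A1 at most one vertex has loops, and after a swap of signs via \cref{rem:change_of_order} we may assume it is the row partition when $p \ge 1$, and either when $p = 0$). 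Its $n$ non-hub vertices can be grouped: $p$ of them to be merged into the hub (turning their edges into the $p$ loops), and the remaining $n - p$ split into groups of sizes $m_2, \dots, m_m$, each group collapsed to a single vertex (turning $m_t$ edges into an $m_t$-fold multi-edge). Performing these identifications yields precisely $G$, including the sign labels, since Property A3 forces all edges at the hub to carry the same sign and this is exactly the sign pattern of the maximal graph.

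Finally, I would assemble: translating this chain of vertex identifications back through the bijection of \cref{thm:graph_partition_A} gives the corresponding chain of block-merges taking a finest partition \cref{eq:finest_An} to the given partition, which is the assertion. The step I expect to require the most care is the bookkeeping in the middle paragraph — checking that the identification of a non-hub vertex with the hub produces exactly a loop (not a multi-edge or a stray sign) and that the sign labels are preserved throughout, so that the quotient graph is literally the graph of the target partition and not merely isomorphic to it as an unlabeled multigraph. But since Properties A1--A5 already pin down the shape and the sign pattern completely, and \cref{rem:change_of_order} handles the one remaining sign ambiguity, this amounts to a short verification rather than a genuine difficulty.
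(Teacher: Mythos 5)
Your proposal is correct and follows essentially the same route as the paper: the paper also deduces the corollary from \cref{thm:graph_partition_A} by observing that merging blocks $\Delta_i \sqcup \Delta_j$ corresponds to identifying vertices of $G$ (creating loops when the hub is involved and multi-edges otherwise), and that every graph of the form in \cref{fig:graph_partition_A} arises from a maximal graph of \cref{fig:maximal_partition_A} by such identifications. You merely spell out the bookkeeping (the counts $p$ and $m_2,\dots,m_m$ and the sign labels) more explicitly than the paper's one-paragraph argument does.
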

Therefore, up to re-numbering of $\Delta_i$'s and swapping the sign labels, there are 
$$
\sum_{k=3}^{n+1} {n+1 \brace k}
$$
$m$-regular partitions with $m \ge 3$. Here ${n \brace k}$ is the Stirling number of the second kind.
The next lemma allows to take into account the action of the corresponding Weyl group $W(\Delta)$.

\begin{lemma}%
\label{lem:Weyl_group_action}
  Let $P \coloneqq \{\Delta_0, \Delta_1, \dots,  \Delta_n\}$ be the set of elements of one of the finest partitions described in \cref{eq:finest_An}.
  The Weyl group $W(\Delta)$ acts on $P$ as the symmetric group $S_{n+1}$.
\end{lemma}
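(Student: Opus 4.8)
The plan is to pass to the standard coordinate realization of $A_n$ and read everything off directly. First I would realize $\Delta$ inside the hyperplane $\{\sum_{i=0}^n x_i = 0\}$ of $F^{n+1}$ with basis $e_0, e_1, \dots, e_n$, so that $\Delta = \{e_i - e_j \mid 0 \le i \neq j \le n\}$ and, in the ordering of simple roots fixed earlier (the one matching \cref{fig:An_string}, unique up to reversal, for which all $\beta_i$ and all differences $\beta_i - \beta_j$ are roots), one has $\alpha_k = e_{k-1} - e_k$ for $1 \le k \le n$. Then $\beta_i = \alpha_1 + \dots + \alpha_i = e_0 - e_i$, consistent with $\beta_0 = 0 = e_0 - e_0$, and consequently
$$
\Delta_i = \{-\beta_i + \beta_j \mid 0 \le j \neq i \le n\} = \{e_i - e_j \mid 0 \le j \neq i \le n\}
$$
is precisely the set of roots whose positive summand is $e_i$ (for the column partition, $\Delta_i = \{e_j - e_i \mid j \neq i\}$ is the set of roots whose negative summand is $e_i$, and the argument below is identical).

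Next I would recall that $W(\Delta)$ is generated by the simple reflections $s_{\alpha_k}$, and that $s_{\alpha_k}$, being the reflection in $e_{k-1} - e_k$, acts on $F^{n+1}$ by transposing the coordinates $e_{k-1}$ and $e_k$ and fixing the rest. Hence each $w \in W(\Delta)$ permutes the set $\{e_0, \dots, e_n\}$ according to some $\sigma_w \in S_{n+1}$, and $w \mapsto \sigma_w$ is an isomorphism $W(\Delta) \xrightarrow{\ \sim\ } S_{n+1}$: the simple reflections are sent to the adjacent transpositions $(k-1\ k)$, which generate $S_{n+1}$ and satisfy exactly the Coxeter relations of $W(A_n)$.

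Finally, from $w(e_i - e_j) = e_{\sigma_w(i)} - e_{\sigma_w(j)}$ it follows at once that $w(\Delta_i) = \Delta_{\sigma_w(i)}$. In particular $W(\Delta)$ preserves $P = \{\Delta_0, \dots, \Delta_n\}$ as a set, and under the bijection $\Delta_i \leftrightarrow i$ the induced action of $W(\Delta)$ on $P$ is exactly the tautological action of $S_{n+1}$ on $\{0,1,\dots,n\}$; in particular the homomorphism $W(\Delta) \to \mathrm{Sym}(P)$ is an isomorphism, which is the claim.

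I do not expect a genuine obstacle here. The only step requiring real care is the bookkeeping that aligns the concrete realization with the previously fixed ordering of simple roots — i.e.\ verifying that $\alpha_k = e_{k-1} - e_k$ is indeed that ordering so that $\beta_i = e_0 - e_i$; once this is pinned down, the rest is a one-line computation.
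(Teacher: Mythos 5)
Your proof is correct. It takes a mildly different route from the paper's main argument: the paper works intrinsically with the $\beta_i$'s, computing directly that the only simple reflections moving $-\beta_k$ are $s_1, s_k, s_{k+1}$, that $s_k$ ($2 \le k \le n$) swaps $\Delta_{k-1}$ with $\Delta_k$ and $s_1$ swaps $\Delta_0$ with $\Delta_1$ while fixing the remaining blocks, and then invokes the fact that adjacent transpositions generate $S_{n+1}$. You instead pass to the coordinate realization $\Delta = \{e_i - e_j\}$, identify $\Delta_i$ with the set of roots supported positively on $e_i$, and let the standard identification $W(A_n) \cong S_{n+1}$ (permutation of coordinates) do all the work. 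Your bookkeeping is right: with $\alpha_k = e_{k-1} - e_k$ one gets $\beta_i = e_0 - e_i$ and $-\beta_i + \beta_j = e_i - e_j$, so $w(\Delta_i) = \Delta_{\sigma_w(i)}$ and the action on $P$ is the tautological $S_{n+1}$-action. This is essentially the alternative argument the paper itself sketches in the remark immediately following the lemma (there phrased via $N_G(T)/T$ containing the permutation matrices). Your version is arguably cleaner and yields the slightly stronger statement that $W(\Delta) \to \mathrm{Sym}(P)$ is an isomorphism, at the cost of the coordinate-matching step you correctly flag; the paper's computation avoids choosing a realization and stays in the language of the $\beta_i$'s used throughout the rest of the classification. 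One cosmetic nit: the root system should be realized in the rational or real span of the roots rather than in $F^{n+1}$ (the field $F$ plays no role here), but this does not affect the argument.
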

\begin{proof}
    Consider the first partition in \cref{eq:finest_An}.
    Let us examine the action of the simple reflections $s_i \coloneqq s_{\alpha_i}$ on the elements $\beta_1 \in \Delta_0$ and $-\beta_k \in \Delta_k$, $2 \le k \le n$.
    The only simple reflections impacting $-\beta_k$ are $s_{1}, s_{k}$ and $s_{k+1}$ with the actions given by
    $$
    \begin{cases}
      s_1(-\beta_k) = -\beta_k + \beta_1 \in \Delta_k, \\
      s_k(-\beta_k) = -\beta_{k-1} \in \Delta_{k-1}, \\
      s_{k+1}(-\beta_k) = -\beta_{k+1} \in \Delta_{k+1}.
    \end{cases}
    $$
    Consequently, the simple reflection $s_k$, $2 \le k \le n$, swaps elements $\Delta_{k-1}$ and $\Delta_{k}$ and $s_1$ interchanges $\Delta_0$ with $\Delta_1$, leaving all other elements of the partition untouched. 
    The proof is now complete, because transpositions $(j, j+1)$ with $1 \le j \le n$ generate $S_{n+1}$.
\end{proof}
\begin{remark}
    Another way to get the same result is to view the Weyl group $W(\Delta)$ as the group $N_{G}(T)/T$, where $G = GL(n, F)$ and $T \subset G$ is the subgroup of diagonal matrices.
    The statement now follows from the observation that $N_{G}(T)/T$ contains all the permutation matrices.
\end{remark}

Combining \cref{cor:any_decomposition_extends} with
\cref{lem:Weyl_group_action} we see that any $m$-regular partition of $A_n$ is completely determined by a partition $\lambda = (\lambda_1, \dots, \lambda_m)$ of the number $(n+1)$.

\begin{proposition}%
\label{prop:m_regular_An}
    Up to swapping positive and negative roots and the action of the Weyl group $W(\Delta)$,
    any $m$-regular partition $\Delta = \sqcup_1^m \Delta_i$ 
    with $m \ge 3$ has the form
    \begin{equation}
        \begin{aligned}
            \Delta_1 &= \{ -\beta_i + \beta_j \mid 0 \le i < \lambda_1, \ 0 \le j \neq i \le n  \}, \\
            \Delta_2 &= \{ -\beta_i + \beta_j \mid \lambda_1 \le i < \lambda_1 + \lambda_2, \ 0 \le j \neq i \le n  \}, \\
            &\vdots \\
            \Delta_m &= \{ -\beta_i + \beta_j \mid \lambda_1 + \dots + \lambda_{m-1} \le i < n+1, \ 0 \le j \neq i \le n  \},
        \end{aligned}
    \end{equation}
    where $\lambda = (\lambda_1, \dots, \lambda_m)$ is an $m$-partition of $(n+1)$. 
\end{proposition}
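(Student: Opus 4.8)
The plan is to deduce the statement directly from the structural results already in place, namely \cref{cor:any_decomposition_extends}, \cref{lem:Weyl_group_action} and \cref{thm:graph_partition_A}. First I fix notation: with $\beta_0 \coloneqq 0$, the first of the two finest partitions in \cref{eq:finest_An} is $P = \{\Delta_0, \Delta_1, \dots, \Delta_n\}$ with $\Delta_i = \{-\beta_i + \beta_j \mid 0 \le j \neq i \le n\}$, and the second finest partition is precisely the image of $P$ under the sign change $\alpha \mapsto -\alpha$. By \cref{cor:any_decomposition_extends}, any $m$-regular partition $\Delta = \Delta_1' \sqcup \dots \sqcup \Delta_m'$ with $m \ge 3$ is a coarsening of one of these two; swapping positive and negative roots if necessary, I may assume it is a coarsening of $P$. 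Such a coarsening is recorded by a set partition $\{B_1, \dots, B_m\}$ of the index set $\{0, 1, \dots, n\}$ into $m$ nonempty parts, and, after re-numbering the $\Delta_\ell'$, we have $\Delta_\ell' = \bigsqcup_{i \in B_\ell} \Delta_i = \{-\beta_i + \beta_j \mid i \in B_\ell,\ 0 \le j \neq i \le n\}$. Writing $\lambda_\ell \coloneqq |B_\ell| \ge 1$ we get $\lambda_1 + \dots + \lambda_m = n+1$.

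Next I normalise the blocks $B_\ell$ using the Weyl group. By \cref{lem:Weyl_group_action}, $W(\Delta)$ permutes $P = \{\Delta_0, \dots, \Delta_n\}$ as the full symmetric group $S_{n+1}$ on the indices $\{0, \dots, n\}$, so applying the Weyl element realising a permutation $\sigma$ sends the coarsening associated with $\{B_\ell\}$ to the one associated with $\{\sigma(B_\ell)\}$. Choosing $\sigma$ so that each $\sigma(B_\ell)$ is an interval of consecutive integers, and re-numbering the parts so that $B_1 = \{0, \dots, \lambda_1 - 1\}$, $B_2 = \{\lambda_1, \dots, \lambda_1 + \lambda_2 - 1\}$, and in general $B_\ell = \{\lambda_1 + \dots + \lambda_{\ell-1}, \dots, \lambda_1 + \dots + \lambda_\ell - 1\}$, substitution into the formula for $\Delta_\ell'$ above yields exactly the displayed expressions, with $\lambda = (\lambda_1, \dots, \lambda_m)$ an $m$-composition of $(n+1)$. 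Since re-numbering of the parts is among the allowed equivalences, only the multiset $\{\lambda_1, \dots, \lambda_m\}$ matters, so the partition is classified by an $m$-partition $\lambda$ of $(n+1)$, as claimed. Conversely, every such $\lambda$ does occur: the corresponding coarsening of $P$ has an associated graph of the admissible form of \cref{fig:graph_partition_A} (merging the vertex of $\Delta_0$ with others produces loops, merging other vertices produces multiple edges), hence by \cref{thm:graph_partition_A} it is a genuine $m$-regular partition.

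The only point that needs care is the bookkeeping around the index $0$ and the root $\beta_0 = 0$: one must check that in the block $B_\ell$ containing $0$ the formula $\{-\beta_i + \beta_j \mid i \in B_\ell\}$ correctly reproduces the roots $\beta_j$ (the ``loop'' part of the graph $G$), and that the single convention $\beta_0 = 0$ handles uniformly both the case in which $0$ sits alone in its block and the case in which it is merged with other indices. I do not expect any real obstacle beyond this, since the substantive work — forcing the shape of $G$ via Properties A1--A5, the uniqueness in \cref{thm:graph_partition_A}, and the identification of the $W(\Delta)$-action on $P$ with $S_{n+1}$ — has already been carried out.
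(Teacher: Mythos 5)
Your argument is correct and is essentially the paper's own: the paper gives no separate proof of \cref{prop:m_regular_An} beyond the one-line observation that it follows by combining \cref{cor:any_decomposition_extends} (reduction to a coarsening of a finest partition, after a sign swap) with \cref{lem:Weyl_group_action} (the $S_{n+1}$-action lets you normalise the blocks to consecutive intervals), which is exactly what you do. Your added care about the index $0$, the convention $\beta_0=0$, and the converse existence via \cref{thm:graph_partition_A} is sound and only makes explicit what the paper leaves implicit.
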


This completes the classification in the first case. 
Note that the number of partitions $p(n+1)$ is known to grow exponentially as $\exp(\Theta(\sqrt{n}))$, where $f(n) = \Theta(g(n))$ means that $f$ is bounded both below and above by $g$; see \cite{andrew_theory_of_partitions}.

\subsubsection{Type $B_n$}
Assume $\Delta$ is of type $B_n$, $n \ge 2$.
Fix a set of simple roots $\pi = \{\alpha_1, \dots, \alpha_n \}$ ordered in accordance with the Dynkin diagram in \cref{fig:Bn_string}.
\begin{figure}[H]
    \centering
    \includegraphics[scale = 0.7]{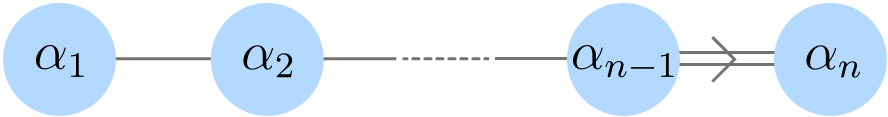}
    \caption{Dynkin diagram for $B_n$.}
    \label{fig:Bn_string}
\end{figure}
\noindent Again, for $1 \le i \le n$ we put $\beta_i \coloneqq \alpha_1 + \dots + \alpha_i$ and $\beta_0 \coloneqq 0$.
Then the set $$S \coloneqq \{ \beta_i - \beta_j \mid 0 \le i \neq j \le n \}$$
is a subset of $\Delta$.
Observe that $S$ is not a closed subset of $\Delta$: for example, the maximal root $\beta_n + (\beta_n - \beta_1)$ is not in $S$.
However, the roots in $S$ satisfy the same linear relations as the corresponding roots in $A_n$.
More precisely, a linear combination of roots $\beta_i - \beta_j$ is again a root in $S$ if and only if the same linear combination is a root in $A_n$.


The existence of such a subset guarantees, that the graph $G$
of an $(m \ge 3)$-regular partition of $\Delta$ with respect to the basis $\{ \beta_i \}_1^n$ has all the properties A1 -- A5 above.
In particular, up to re-numbering the sets $\Delta_i$ and swapping positive and negative roots, it must be of the form \cref{fig:graph_partition_A}.
However, the presence of additional roots impose further restrictions on $G$.

\paragraph{Property BC1.} The graph $G$, associated to a regular partition of $B_n$, cannot contain subgraphs of the form
\begin{figure}[H]
    \centering
    \includegraphics[scale = 0.7]{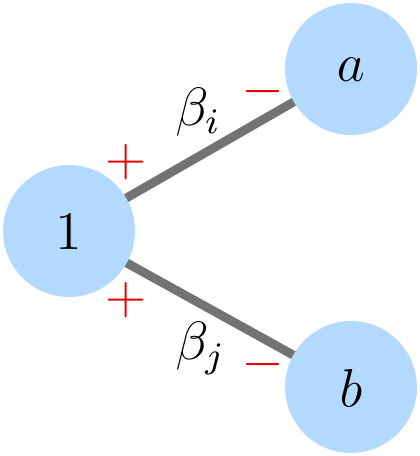}
\end{figure}
\noindent with $1 < a \neq b \le m$ and $1 \le i \neq j \le n$.
\begin{proof}
    Assume the opposite. We first consider the case $i,j < n$.
    Repeating the argument from the proof of \cref{thm:graph_partition_A} we obtain the contaiments
    $$
    \beta_n - \beta_i \in \Delta_a \ \text{ and } \ \beta_n - \beta_j \in \Delta_b.
    $$
    It follows that $2\beta_n - \beta_i \in \Delta_1 \sqcup \Delta_a$ and $2\beta_n - \beta_j \in \Delta_1 \sqcup \Delta_b$.
    Let us show, that the latter two roots cannot lie in $\Delta_1$.
    Suppose $2 \beta_n - \beta_i \in \Delta_1$ and $-\beta_n \not \in \Delta_a$, then
    $$
    \Delta_a \not \ni (2\beta_n - \beta_i) - \beta_n = \beta_n - \beta_i \in \Delta_a
    $$
    which is impossible.
    If $-\beta_n \in \Delta_a$, then by previous argument $2\beta_n - \beta_j \in \Delta_b$ and hence
    $$
    \Delta_a \sqcup \Delta_b \ni (2 \beta_n - \beta_j) + (\beta_j - \beta_i) = 2 \beta_n - \beta_i \in \Delta_1.
    $$
    Therefore, $2\beta_n - \beta_i \in \Delta_a$ and similarly $2\beta_n - \beta_j \in \Delta_b$.
    
    Furthermore, the root $2\beta_n - \beta_j - \beta_i$ must lie in $\Delta_a \sqcup \Delta_b$, which is impossible.
    Indeed, if $2\beta_n - \beta_j - \beta_i \in \Delta_a$, then
    $$
    \Delta_b \ni 2 \beta_n - \beta_j = (2 \beta_n - \beta_j - \beta_i) + \beta_i \in \Delta_1 \sqcup \Delta_a.
    $$
    Similarly with the second containment. 
    
    Finally, assume $j = n$.
    Then, again, $2\beta_n - \beta_i \in \Delta_a$ and $\beta_i -  \beta_n \in \Delta_b$.
    Implying
    $$\Delta_1 \ni \beta_n = (2 \beta_n - \beta_i) + (\beta_i - \beta_n) \in \Delta_a \sqcup \Delta_b.$$
\end{proof}
Since any regular partition of $B_n$  into $m \ge 3$ parts must contain a subgraph of the form mentioned in Property B1, we have the following statement. 
\begin{proposition}
There are no regular partitions of $B_n$ into $m \ge 3$ parts.    
\end{proposition}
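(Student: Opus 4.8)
The plan is to derive the statement directly from the structural description of the graph $G$ obtained above (\cref{fig:graph_partition_A}) together with Property BC1, by a short argument by contradiction. Suppose $B_n$ with $n \ge 2$ admits an $m$-regular partition with $m \ge 3$, and look at its associated graph $G$ with respect to the basis $\{\beta_i\}_1^n$. Since the roots $\beta_i - \beta_j$ of $B_n$ obey exactly the linear relations of their $A_n$ counterparts, Properties A1--A5 hold verbatim, so after re-numbering the $\Delta_i$ and possibly swapping positive and negative roots, $G$ has the shape of \cref{fig:graph_partition_A}: one central vertex, say vertex $1$, carrying all loops, with each of the remaining $m-1$ vertices joined to vertex $1$ by a single edge.

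The next step is to observe that $m \ge 3$ forces at least two edges at vertex $1$. Indeed, the vertices $2, \dots, m$ carry no loops by Property A5, hence each is joined to vertex $1$ by an edge (Property A2), and there are $m - 1 \ge 2$ of them. Choose two such edges, with labels $\beta_i \ne \beta_j$ (distinct, since each index $\ell$ contributes at most one edge or one loop), running from vertex $1$ to distinct vertices $a, b \in \{2, \dots, m\}$. By Property A3 these two edges have the same sign at vertex $1$, so after a sign swap we may assume $\beta_i, \beta_j \in \Delta_1$ while $-\beta_i \in \Delta_a$ and $-\beta_j \in \Delta_b$ with $1 < a \ne b \le m$. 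This is precisely a subgraph forbidden by Property BC1, a contradiction; hence no $m$-regular partition of $B_n$ with $m \ge 3$ exists.

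I do not expect a genuine obstacle in this final step: the delicate work — the case analysis in Property BC1 that chases $2\beta_n - \beta_i$, $2\beta_n - \beta_j$ and $2\beta_n - \beta_j - \beta_i$ through the partition — is already in hand, and what is left is the bookkeeping above. The one point I would take care over is verifying that $m \ge 3$ really does produce two \emph{distinct} edges incident to the common vertex: this is exactly where the hypothesis $m \ge 3$ (rather than $m = 2$) is used, and it relies on Properties A2 and A5 to rule out a non-central vertex being isolated or loop-only.
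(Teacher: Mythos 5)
Your proof is correct and takes essentially the same route as the paper: the paper deduces the proposition in one sentence from Property BC1, observing that any graph of the form in \cref{fig:graph_partition_A} with $m \ge 3$ vertices must contain the forbidden subgraph. Your write-up merely makes explicit the bookkeeping (Properties A2, A3, A5 and the sign normalization producing two distinct edges at the central vertex) that the paper leaves implicit.
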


\subsubsection{Type $C_n$}
Since $C_2$ is isomorphic to $B_2$ we let $n \ge 3$.
Similarly to the previous case, by
ordering the simple roots of $C_n$ in 
accordance with \cref{fig:Cn_string} and defining $\beta_i$ as before, we get a subset $S \coloneqq \{ \beta_i - \beta_j \mid 0 \le i \neq j \le n \} \subset \Delta$ that behaves like $A_n$.
\begin{figure}[H]
    \centering
    \includegraphics[scale = 0.7]{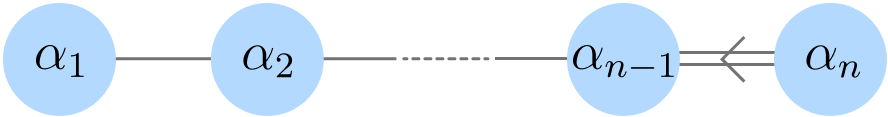}
    \caption{Dynkin diagram for $C_n$.}
    \label{fig:Cn_string}
\end{figure}
\noindent Therefore, up to interchanging the positive and negative roots and re-numbering $\Delta_i$, the graph of an $(m \ge 3)$-regular partition of $C_n$ with respect to $\{\beta_i \}_1^n$ is again of the form
\cref{fig:graph_partition_A}.

We now prove that $G$ has the property BC1 and, consequently, there are no regular partitions of $C_n$ into $3$ or more parts.

\begin{proposition}%
\label{prop:regular_Bn}
    The are no regular partitions of $C_n$ into $m \ge 3$ parts.
\end{proposition}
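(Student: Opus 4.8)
As recorded just above, the graph $G$ of an $(m\ge 3)$-regular partition of $C_n$ with respect to $\{\beta_i\}_1^n$ has the shape of \cref{fig:graph_partition_A}. Since $m\ge 3$, the central vertex $1$ is joined by edges to at least two distinct vertices $a\ne b$ (both $>1$) --- precisely the configuration forbidden by Property BC1 in the $B_n$ case. So the plan is to show that $G$ also satisfies Property BC1 for $C_n$; this is incompatible with $m\ge 3$, and the proposition follows.

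To prove Property BC1 for $C_n$ I would replay the $B_n$ argument with the vector $2\beta_n$ replaced by the highest root $\beta_{n-1}+\beta_n$ of $C_n$ (recall that our ordering puts the long simple root at the end, so the highest root equals $\beta_{n-1}+\beta_n$ and its $\alpha_1$-coefficient is $2$). Assume $G$ contains the forbidden configuration; after possibly swapping positive and negative roots the edges at vertex $1$ carry the sign $+$, so $\beta_i,\beta_j\in\Delta_1$, $-\beta_i\in\Delta_a$, $-\beta_j\in\Delta_b$. Each of $\beta_{n-1},\beta_n$ is a loop at vertex $1$ or an edge with $+$ there, hence $\beta_{n-1},\beta_n\in\Delta_1$ and $\beta_{n-1}+\beta_n\in\Delta_1$. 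The vectors $\beta_{n-1}+\beta_n-\beta_i$ and $\beta_{n-1}+\beta_n-\beta_i-\beta_j$ are again roots which, in contrast to $A_n$, need not lie in the set $S$; they decompose as sums of roots of $S$, for instance $\beta_{n-1}+\beta_n-\beta_i=\beta_{n-1}+(\beta_n-\beta_i)=\beta_n+(\beta_{n-1}-\beta_i)$ and $\beta_{n-1}+\beta_n-\beta_i-\beta_j=(\beta_{n-1}-\beta_i)+(\beta_n-\beta_j)$, and the positions of these $S$-roots are pinned down by \cref{thm:graph_partition_A}. Chasing where $\beta_{n-1}+\beta_n-\beta_i$ and $\beta_{n-1}+\beta_n-\beta_i-\beta_j$ must land, using closedness of each $\Delta_\ell$ and of each pairwise union $\Delta_\ell\sqcup\Delta_{\ell'}$ --- the same chain of deductions as in the proof of Property BC1 for $B_n$ --- one ends up forcing some root into a part $\Delta_\ell$ and, at the same time, into a union $\Delta_{\ell'}\sqcup\Delta_{\ell''}$ with $\ell\notin\{\ell',\ell''\}$, which is impossible.

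The step I expect to be the main obstacle is the casework, which is heavier than for $B_n$: one must follow two distinguished indices, $n-1$ and $n$, instead of one, branching on whether each of $\beta_{n-1},\beta_n$ is a loop or an edge, on whether it coincides with $\beta_i$ or $\beta_j$, and on which vertex an edge runs to. In every branch, however, the obstruction is the coefficient-$2$ phenomenon highlighted in the remark following \cref{mainthm:root_partitions}: because $\beta_{n-1}+\beta_n$ has $\alpha_1$-coefficient $2$, the expansion of an extra root such as $\beta_{n-1}+\beta_n-\beta_i$ in the basis $\{\beta_\ell\}$ carries a negative coefficient, and it is exactly this negative coefficient that, after two closedness steps, sends the root back into $\Delta_1$ after it has already been committed to $\Delta_a\sqcup\Delta_b$. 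Once Property BC1 is established for $C_n$, the proposition follows exactly as in the $B_n$ case.
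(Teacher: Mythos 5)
You are following the paper's route: substitute the long root $\beta_{n-1}+\beta_n$ of $C_n$ for $2\beta_n$ and rerun the argument of Property BC1. The gap is that the decisive step is asserted rather than performed: you write that chasing $\beta_{n-1}+\beta_n-\beta_i$ and $\beta_{n-1}+\beta_n-\beta_i-\beta_j$ ``one ends up forcing'' a contradiction, and you yourself flag the resulting two-index branching (loop versus edge for each of $\beta_{n-1},\beta_n$, possible coincidence with $\beta_i$ or $\beta_j$, choice of target vertex) as the main obstacle without resolving it. What is missing is the one observation the paper proves first precisely in order to kill that branching: since $-\beta_n-\beta_{n-1}$ is again a root, the roots $-\beta_n$ and $-\beta_{n-1}$ must lie in one and the same part $\Delta_k$. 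Indeed, if $-\beta_n\in\Delta_a$ and $-\beta_{n-1}\in\Delta_b$ with $1<a\neq b$, then $-\beta_n-\beta_{n-1}\in\Delta_a\sqcup\Delta_b$; placing it in $\Delta_a$ and adding $\beta_n\in\Delta_1$ puts $-\beta_{n-1}$ into $\Delta_a\sqcup\Delta_1$, while placing it in $\Delta_b$ and adding $\beta_{n-1}\in\Delta_1$ puts $-\beta_n$ into $\Delta_b\sqcup\Delta_1$ --- either way a contradiction. In graph language the labels $\beta_n$ and $\beta_{n-1}$ cannot be separated, so your two distinguished indices always travel together, and the $B_n$ casework of Property BC1 transfers with only the substitutions $2\beta_n-\beta_i \mapsto \beta_n+\beta_{n-1}-\beta_i$, $2\beta_n-\beta_i-\beta_j \mapsto \beta_n+\beta_{n-1}-\beta_i-\beta_j$, and the terminal case $j=n$ replaced by $j\in\{n-1,n\}$.

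Without that reduction your plan is not wrong, but it is incomplete: several of the configurations you propose to branch over (for instance $-\beta_n$ and $-\beta_{n-1}$ attached to different non-central vertices) are exactly those in which $-(\beta_n+\beta_{n-1})$ has no consistent home, so the ``repeat BC1'' template does not apply verbatim there and each such branch would need a separate argument. Establish the inseparability of $\beta_n$ and $\beta_{n-1}$ first, and the remainder collapses to the $B_n$ computation you already cite.
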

\begin{proof}
Assume there is such a partition.
    First of all, we observe that
    $\pm (\beta_n + \beta_{n-1})$ are again roots.
    This implies that there must exist a unique $1 \le k \le m$ such that both roots $-\beta_n$ and $-\beta_{n-1}$ lie in $\Delta_k$.
    Otherwise, the root $-\beta_n - \beta_{n-1}$ cannot be placed in any of the parts.
    In other words, the edges $\beta_n$ and $\beta_{n-1}$ in $G$ cannot be separated.
    Now we can repeat the proof of the property BC1 for $B_n$ above with the roots
    $$(\beta_n + \beta_{n-1}) - \beta_i, \  (\beta_n + \beta_{n-1}) - \beta_j \ \text{ and } \ 
    (\beta_n + \beta_{n-1}) - \beta_i - \beta_j$$ 
    instead of $2\beta_n - \beta_i$, $2\beta_n - \beta_j$ and $2\beta_n - \beta_j - \beta_i$ respectively.
\end{proof}

\subsubsection{Type $D_n$}
For $\Delta$ of type $D_n$, $n \ge 4$, we arrange the simple roots $\pi = \{ \alpha_1, \dots, \alpha_n \}$ as depicted in \cref{fig:Dn_string}.
Let $R_1$ and $R_2$ be the subsystems of $\Delta$ spanned by the roots $\pi \setminus \{\alpha_n\}$ and $ \pi \setminus \{ \alpha_{n-1} \}$ respectively.
It is clear that both $R_1$ and $R_2$ are irreducible of type $A_{n-1}$.
\begin{figure}[H]
    \centering
    \includegraphics[scale = 0.7]{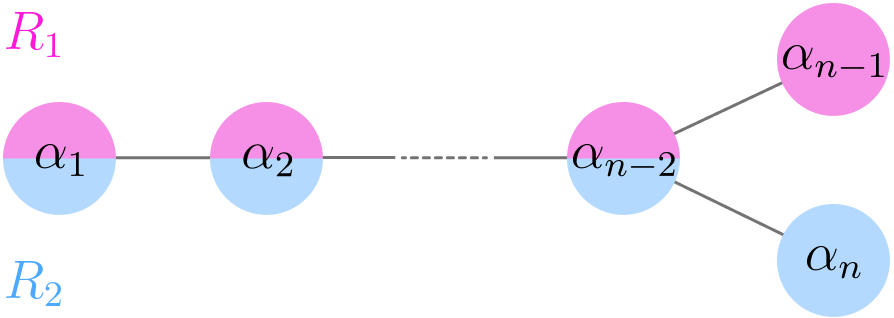}
    \caption{Subsystems $R_1, R_2 \subset \Delta$ of type $A_{n-1}$.}
    \label{fig:Dn_string}
\end{figure}
\noindent Any regular partition of $\Delta$ restricts in an obvious way to regular partitions of subsystems $R_1$ and $R_2$.
We now consider the corresponding graph $G$ with respect to following basis of $\Delta$:
\begin{align*}
    \gamma_i &\coloneqq \beta_i \coloneqq \alpha_1 + \alpha_2 + \dots + \alpha_{i}, \ 1 \le i \le n-1; \\
    \gamma_n &\coloneqq \alpha_1 + \dots + \alpha_{n-2} + \alpha_n.
\end{align*}
Observe that the graphs $G_1$ and $G_2$  of the induced regular partitions of $R_1$ and $R_2$ respectively with respect to bases 
$$\{\gamma_i \mid 1 \le i \le n-1\} \ \text{ and } \ \{ \gamma_i \mid 1 \le i \neq n-1 \le n \}$$ are subgraphs of $G$.
Furthermore, we can understand $G$ by understanding its subgraphs $G_1, G_2$ and how they can be glued together.

\begin{lemma}%
\label{lem:three_vertices_each_graph}
If $G$ has $m \ge 3$ vertices, then so do $G_1$ and $G_2$.
\end{lemma}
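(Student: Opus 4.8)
The plan is to argue by contradiction and to transfer everything into the standard realization $\Delta = \{\pm e_i \pm e_j \mid 1 \le i < j \le n\}$ of $D_n$, in which $R_1 = \{e_i - e_j \mid 1 \le i \ne j \le n\}$ and hence $\Delta \setminus R_1 = \{\pm(e_i + e_j) \mid 1 \le i < j \le n\}$. First I would observe that the two claims are equivalent: the diagram automorphism of $D_n$ swapping $\alpha_{n-1}$ and $\alpha_n$ (the reflection $e_n \mapsto -e_n$) fixes $\gamma_1, \dots, \gamma_{n-2}$, sends $\gamma_{n-1}$ to $\gamma_n$, exchanges $R_1$ with $R_2$, and carries the given regular partition to another regular partition whose graph still has $m$ vertices; under this automorphism the roles of $G_1$ and $G_2$ are interchanged, so it suffices to show that $G_1$ has at least three vertices. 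Assume not; then the induced regular partition of $R_1$ has at most two non-empty parts, say $R_1 \subseteq \Delta_a \sqcup \Delta_b$ with $a, b$ possibly equal.

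Because $m \ge 3$, some part $\Delta_c$ with $c \notin \{a, b\}$ is non-empty and disjoint from $R_1$, so all its roots have the form $\pm(e_i + e_j)$; say $e_p + e_q \in \Delta_c$ (the purely negative case is symmetric). The heart of the argument is a propagation step: for any $r \notin \{p, q\}$, which exists since $n \ge 4$, write $e_q + e_r = (e_p + e_q) + (e_r - e_p)$ with $e_r - e_p \in R_1 \subseteq \Delta_a \sqcup \Delta_b$; using that $\Delta_c$, $\Delta_c \sqcup \Delta_a$, $\Delta_c \sqcup \Delta_b$, $\Delta_a \sqcup \Delta_b$, and $\Delta_c \sqcup \Delta_d$ for every further part $\Delta_d$ are all closed, together with the identity $e_p + e_q = (e_q + e_r) + (e_p - e_r)$, I expect to eliminate every possible home of $e_q + e_r$ except $\Delta_c$. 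Iterating over all pairs of indices should give $\{e_i + e_j \mid i < j\} \subseteq \Delta_c$; and then $e_r - e_q = (e_p + e_r) + (-e_p - e_q)$, whose left-hand side lies in $R_1$ and hence outside $\Delta_c$, forces $\Delta_c$ to contain no root $-(e_i + e_j)$, so that $\Delta_c = \{e_i + e_j \mid 1 \le i < j \le n\}$ exactly.

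Finally I would contradict $m \ge 3$. If a part $\Delta_d \ne \Delta_c$ contains a root $-e_p - e_q$, then from $e_r - e_p = (e_r + e_q) + (-e_p - e_q)$ and $e_r - e_q = (e_r + e_p) + (-e_p - e_q)$ and closedness of $\Delta_c \sqcup \Delta_d$ one gets $\{e_r - e_p, e_r - e_q \mid r \notin \{p, q\}\} \subseteq \Delta_d$, a large chunk of $R_1$. Comparing these chunks for two distinct parts $\Delta_d, \Delta_{d'}$ that both meet $\{-(e_i + e_j)\}$---after, if the index sets are disjoint, first locating the mixed root $-e_p - e_s = (-e_p - e_q) + (e_q - e_s)$ in one of them---the index constraints leave a common root as soon as $n \ge 4$, contradicting disjointness; hence all roots $-(e_i + e_j)$ lie in a single part $\Delta_d$, which then contains $\{e_r - e_p \mid r \notin \{p, q\}\}$ for every $p$, i.e.\ all of $R_1$, so $\Delta_d = \Delta \setminus \Delta_c$ and $m = 2$, a contradiction. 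One can also first note the faster partial reduction that $\{\gamma_1, \dots, \gamma_n\}$ is a $\mathbb{Z}$-basis of the root lattice, so the analogue of Property A2 holds for $G$ and any vertex of $G$ not incident to the loop or edge labeled $\gamma_n$ already lies in $G_1$; this yields $|V(G_1)| \ge m - 2$ and likewise $|V(G_2)| \ge m - 2$, leaving only $m \in \{3, 4\}$ for the case analysis above. The main obstacle is precisely that bookkeeping---tracking which of $\Delta_a, \Delta_b, \Delta_c, \Delta_d$ each auxiliary root belongs to, invoking the correct closed union at each step, and checking that the index restrictions never exhaust all $n$ indices, which is where $n \ge 4$ is used.
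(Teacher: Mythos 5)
Your argument is correct, and it takes a genuinely different route from the paper's. The paper first reduces to $m=3$ by merging parts, then enumerates the handful of three-vertex graphs in which one of $G_1,G_2$ has only two vertices, and kills each by chasing the roots $\pm\gamma_a\pm\gamma_b\mp\gamma_i\mp\gamma_j$ in the $\gamma$-basis until it forces $\gamma_a\in\Delta_1$ and $-\gamma_a\in\Delta_2$ simultaneously. You avoid both the reduction and the case split: working in coordinates, you show that any part $\Delta_c$ disjoint from $R_1$ must equal exactly $\{e_i+e_j\mid i<j\}$ (or its negative), and that the roots $-(e_i+e_j)$ then drag all of $R_1$ into a single further part, so $m=2$. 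I checked the two steps you left slightly hedged and they do close: in the propagation, if $e_q+e_r$ landed in some $\Delta_x\subseteq\Delta_a\sqcup\Delta_b$, then $e_p+e_q=(e_q+e_r)+(e_p-e_r)$ would lie in a closed union contained in $\Delta_a\sqcup\Delta_b$, contradicting $e_p+e_q\in\Delta_c$; and in the last step, any part meeting $\{-(e_i+e_j)\}$ necessarily meets $R_1$, so at most $\Delta_a$ and $\Delta_b$ are candidates, and your overlapping-index argument (with the mixed root $-e_p-e_{q'}$ handling the disjoint case) rules out using both. Your diagram-automorphism reduction ($e_n\mapsto -e_n$, which is a root-system automorphism though not in $W(D_n)$) cleanly replaces the paper's symmetric treatment of $G_1$ and $G_2$. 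What the paper's version buys is that it stays entirely inside the graph formalism it set up and reuses in \cref{prop:D_n_partition}; what yours buys is a shorter, coordinate-free-of-cases proof that in addition exhibits the precise shape a counterexample would have to take before collapsing it to $m=2$.
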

\begin{proof}
    By construction $m -1 \le |G_i| \le m$ for $1 \le i \le 2$. 
    Identifying together elements of the partition, we can restrict ourselves to the case $m = 3$.  
    Consequently, to prove the statement it is then enough to show, that there are no partitions with the following graphs
    \begin{figure}[H]
    \centering
    \includegraphics[scale = 0.7]{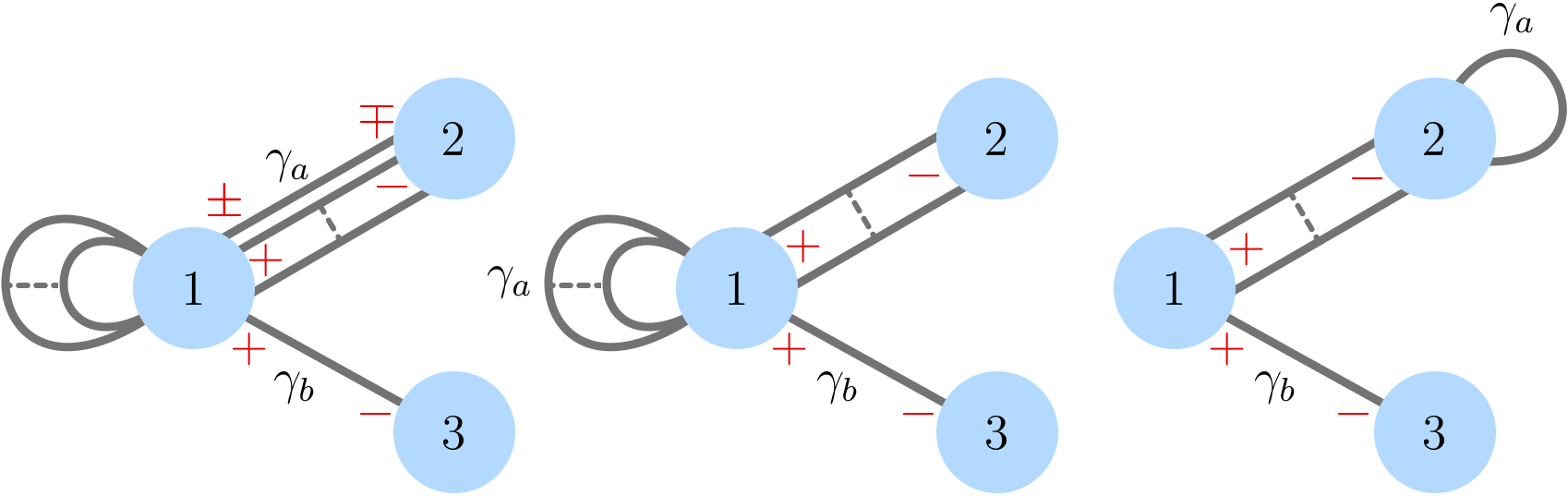}
\end{figure}
\noindent where $\{a,b \} = \{n,n-1\}$.
These are the only graphs on $3$ vertices, up to swapping the sign-labels, with the property that both $G_1$ and $G_2$ are of type $A_{n-1}$ and at least one of them contains $2$ vertices.

In all cases we have the contaiments
\begin{align*}
  &\pm (-\gamma_a + \gamma_i), \ \gamma_a + \gamma_b - \gamma_i, \ \gamma_a + \gamma_b - \gamma_i - \gamma_j \in \Delta_1 \sqcup \Delta_2,   \\
  &-\gamma_b + \gamma_i \in \Delta_3
\end{align*}
for all $1 \le i \neq j \le n-2$.
Therefore, $-\gamma_a - \gamma_b + \gamma_i \in \Delta_3$.
Otherwise 
$$\Delta_3 \ni -\gamma_b = (-\gamma_a - \gamma_b + \gamma_i) + (\gamma_a - \gamma_i) \in \Delta_1 \sqcup \Delta_2. $$
Moreover, since 
$$
\Delta_3 \ni -\gamma_a - \gamma_b + \gamma_i = (-\gamma_a - \gamma_b + \gamma_i + \gamma_j) - \gamma_j 
$$
and $-\gamma_j \in \Delta_1 \sqcup \Delta_2$, we must have
$-\gamma_a - \gamma_b + \gamma_i + \gamma_j \in \Delta_3$.
For this reason, the relation
$$
\Delta_1 \ni \gamma_i = (-\gamma_a - \gamma_b + \gamma_i + \gamma_j) + (\gamma_a + \gamma_b - \gamma_j)
$$
implies the containment $\gamma_a + \gamma_b - \gamma_j \in \Delta_1$, which, in its turn, leads to
\begin{equation}%
\label{eq:a_i_Delta_1}
    \gamma_a - \gamma_j\in \Delta_1
\end{equation}for all $1 \le j \le n-2$.
Moreover, if 
$\gamma_a + \gamma_b - \gamma_i - \gamma_j \in \Delta_2$, then
$$
\Delta_2 \sqcup \Delta_3 \ni (\gamma_a + \gamma_b - \gamma_i - \gamma_j) + (-\gamma_b + \gamma_j) = \gamma_a - \gamma_i \in \Delta_1.
$$
Therefore, we have 
$\gamma_a + \gamma_b - \gamma_i - \gamma_j \in \Delta_1$ for all
$1 \le i \neq j \le n-2$. 
Because of the equality
$$(\gamma_a + \gamma_b - \gamma_i - \gamma_j) + (-\gamma_a -\gamma_b + \gamma_j ) = -\gamma_i$$ 
all $\gamma_i$ must be loops. 
Combining this observation with \cref{eq:a_i_Delta_1} we see that $\gamma_a \in \Delta_1$ and $-\gamma_a \in \Delta_2$. 
This is already impossible in cases two and three. 
In the first case we get a further contradiction
$\Delta_1 \ni (-\gamma_a - \gamma_b + \gamma_i) + (\gamma_b - \gamma_i) = -\gamma_a \in \Delta_2 $.
\end{proof}

The statement of \cref{lem:three_vertices_each_graph} can also be reformulated in the following way. 
The graph of an $m$-regular partition of $D_n$, $m \ge 3$, never contains a single (isolated) edge $\gamma_n$ or $\gamma_{n-1}$.
Having this result at hand, we can show that such partitions are impossible.

\begin{proposition}%
\label{prop:D_n_partition}
    There are no regular partitions of $D_n$ with $m \ge 3$ parts.
\end{proposition}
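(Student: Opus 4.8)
The plan is to suppose that a type-$D_n$ root system, $n\ge4$, has a regular partition $\Delta=\bigsqcup_{1}^{m}\Delta_i$ with $m\ge3$, and to reach a contradiction. Since any union $\bigcup_{i\in I}\Delta_i$ of parts of a regular partition is closed (two of its roots lie either in one $\Delta_i$ or in some $\Delta_i\sqcup\Delta_j$, both of which are closed), every coarsening of a regular partition is again regular, so one may assume $m=3$. Keep the simple roots and the vectors $\gamma_1,\dots,\gamma_n$ as above, and let $G$, $G_1$, $G_2$ denote the graphs of the partition and of its restrictions to the subsystems $R_1\cong A_{n-1}$ and $R_2\cong A_{n-1}$, taken with respect to the bases $\{\gamma_i\}_{1}^{n}$, $\{\gamma_i\}_{1}^{n-1}$ and $\{\gamma_i:1\le i\neq n-1\le n\}$.

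First I would fix the shape of $G$. By \cref{lem:three_vertices_each_graph} the induced partitions of $R_1$ and $R_2$ also have three parts, so by Properties A1--A5 each of $G_1$ and $G_2$ has the shape of \cref{fig:graph_partition_A}: a hub carrying every loop, the two remaining vertices joined to it by one or more edges, and a single sign at the hub. Next I would glue $G_1$ and $G_2$ along their common basis vectors $\gamma_1,\dots,\gamma_{n-2}$ --- there are at least two of these since $n\ge4$, and the loop or edge carrying a given $\gamma_i$ is the same object whether it is read off $G$, $G_1$ or $G_2$. Running through the possibilities (each common $\gamma_i$ is a loop or an edge; in a three-vertex graph of the form \cref{fig:graph_partition_A} every vertex meets the hub) shows that, up to renumbering the parts and swapping the sign labels, $G$ is one of a short list of graphs, the principal one being: the hubs of $G_1$ and $G_2$ coincide at a vertex $h$, and each $\gamma_\ell$ ($1\le\ell\le n$) is either the unique loop at $h$ or an edge from $h$ to one of the two other vertices, with $\gamma_\ell\in\Delta_h$ for every edge $\gamma_\ell$; the rest are a handful of degenerate configurations (with the hubs disagreeing) to be treated in the same way.

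The crucial ingredient is then an analogue of Property BC1 built from the roots of $D_n$ that lie in neither $R_1$ nor $R_2$, namely $\gamma_{n-1}+\gamma_n-\gamma_i$ ($1\le i\le n-2$) and $\gamma_{n-1}+\gamma_n-\gamma_i-\gamma_j$ ($1\le i\neq j\le n-2$), which are genuine roots even though $\gamma_{n-1}+\gamma_n$ is not; this is precisely where the coefficient $2$ in the highest root of $D_n$ obstructs the partition. In the principal configuration, pick two edges $\gamma_i,\gamma_j$ at $h$ running to the two distinct non-hub vertices $v_i\neq v_j$ (such a pair exists because, with $m=3$, each non-hub vertex carries an edge by Property A2). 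Using that every non-loop $\gamma_\ell$ lies in $\Delta_h$ and that $\pm\gamma_\ell\in\Delta_h$ for the at most one loop $\gamma_\ell$, one first locates the auxiliary roots $\gamma_n-\gamma_i,\ \gamma_{n-1}-\gamma_i\in\Delta_{v_i}$ and $\gamma_n-\gamma_j,\ \gamma_{n-1}-\gamma_j\in\Delta_{v_j}$ (none can lie in $\Delta_h$, since otherwise a single further addition drops $-\gamma_i$ or $-\gamma_j$ into $\Delta_h$), and then closedness of $\Delta_{v_i}\sqcup\Delta_{v_j}$ confines the extra root $\gamma_{n-1}+\gamma_n-\gamma_i-\gamma_j=(\gamma_n-\gamma_i)+(\gamma_{n-1}-\gamma_j)$ to $\Delta_{v_i}\sqcup\Delta_{v_j}$. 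Chasing closedness a few more steps --- now bringing in the loop(s) at $h$ and the other extra roots --- leads in every case to a single root whose placement is impossible: it is confined to $\Delta_{v_i}\sqcup\Delta_{v_j}$ yet excluded from each of $\Delta_{v_i}$ and $\Delta_{v_j}$, or else forced simultaneously into $\Delta_h$, and $h,v_i,v_j$ are pairwise distinct. The degenerate configurations die the same way.

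The main obstacle I expect lies in the bookkeeping of the last two paragraphs: assembling the complete (finite) list of shapes of $G$ that survive the gluing of $G_1$ and $G_2$, and, for each, producing the chain of closedness deductions that traps a single root in a contradictory position. Every individual step is elementary, but --- unlike $B_n$ and $C_n$, where one forbidden-subgraph lemma sufficed --- no single such lemma works here, so the case analysis must be carried out and checked by hand.
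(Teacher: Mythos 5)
Your plan is essentially the paper's proof: reduce to $m=3$ by coarsening, invoke \cref{lem:three_vertices_each_graph} to force both $G_1$ and $G_2$ to be three-vertex hub graphs of the shape in \cref{fig:graph_partition_A}, glue them along $\gamma_1,\dots,\gamma_{n-2}$, and kill each surviving configuration using the roots $\gamma_{n-1}+\gamma_n-\gamma_i$ and $\gamma_{n-1}+\gamma_n-\gamma_i-\gamma_j$, which exist precisely because of the coefficient $2$ in the highest root. The bookkeeping you defer is exactly what the paper's proof consists of: the gluing leaves only four subgraph configurations for how $\gamma_{n-1}$ and $\gamma_n$ attach, and each is dispatched by a short closedness chain of the kind you describe.
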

\begin{proof}
    Let us assume the opposite.
    By identifying some $\Delta_i$'s we can assume $m = 3$.
    By \cref{lem:three_vertices_each_graph} the associated graph $G$, up to switching the sign labels and re-numbering $\Delta_i$, must contain one of the following subgraphs
    \begin{figure}[H]
    \centering
    \includegraphics[scale = 0.7]{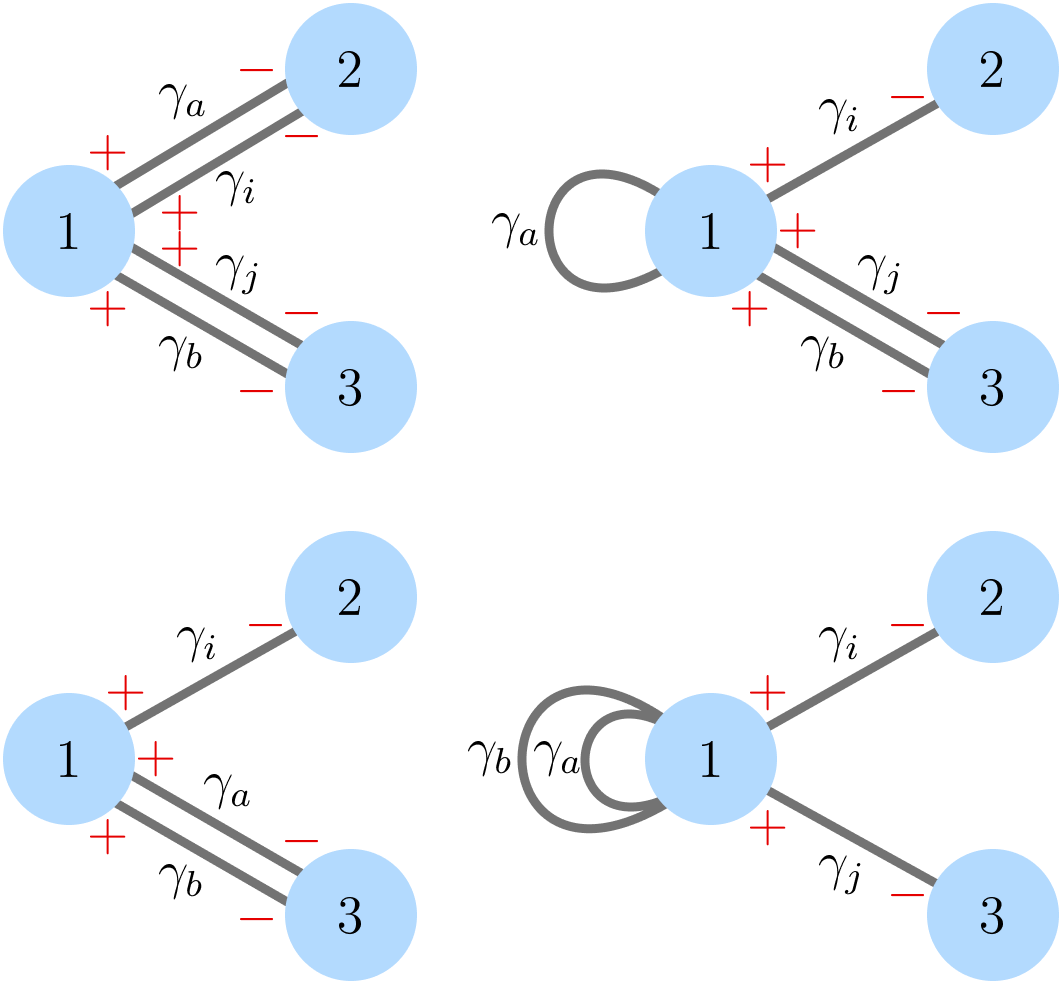}
    \end{figure}
    \noindent with $\{a,b\} = \{n,n-1 \}$.
    In the upper two cases we either have $$-\gamma_a -\gamma_b + \gamma_i + \gamma_j \in \Delta_2 \sqcup \Delta_3 \ \text{ or } \ 
    -\gamma_a -\gamma_b + \gamma_i + \gamma_j \in \Delta_1 \sqcup \Delta_3$$ respectively.
    These containments are incompatible with identities
    \begin{align*}
        (-\gamma_a -\gamma_b + \gamma_i + \gamma_j) + (-\gamma_i + \gamma_a) &= -\gamma_b + \gamma_j, \\
        (-\gamma_a -\gamma_b + \gamma_i + \gamma_j) + (-\gamma_j + \gamma_b) &= -\gamma_a + \gamma_i.
    \end{align*}
    For the bottom-left case $-\gamma_a -\gamma_b + \gamma_i + \gamma_j \in \Delta_3$ for all $1 \le j \le n-2$ different from $i$, while $\gamma_a + \gamma_b - \gamma_i \in \Delta_2$.
    This means that $\gamma_j \in \Delta_2 \sqcup \Delta_3$ which is impossible.
    Finally, in the last case $\gamma_a + \gamma_b - \gamma_i - \gamma_j \in \Delta_2 \sqcup \Delta_3$ is incompatible with
    \begin{align*}
        (\gamma_a + \gamma_b - \gamma_i - \gamma_j) + (-\gamma_a + \gamma_i) &= \gamma_b - \gamma_j, \\
        (\gamma_a + \gamma_b - \gamma_i - \gamma_j) + (-\gamma_a + \gamma_j) &= \gamma_b - \gamma_i.
    \end{align*}
\end{proof}

Now we are left with $5$ exceptional cases.
All of them can be reduced to either $B_n$ or $D_n$.

\subsubsection{Type $G_2$}
If $\alpha < \beta$ are two simple roots inside $G_2$, then the set of all roots looks as follows
$$
\Delta = \pm \{\alpha, \beta, \alpha + \beta, 2\alpha + \beta, 3\alpha + \beta, 3\alpha + 2\beta \}.
$$
The subset $S = \pm \{ \alpha, \beta, \alpha + \beta, 2\alpha + \beta \} \subseteq \Delta$
is not a subsystem, but it behaves in a way similar to $B_2$. 
More precisely, if we denote two simple roots of $B_2$ using the same letters $\alpha < \beta$, then $\lambda_1 \alpha + \lambda_2 \beta \in B_2$ for some $\lambda_i \in \mathbb{Z}$ if and only if $\lambda_1 \alpha + \lambda_2 \beta \in S$.
Since the roots of $B_2$ cannot be divided into more than two parts, the same is true for the roots in subset $S \subseteq \Delta$.
At the same time, the roots inside $S$ generate the whole system $G_2$.
Consequently, all the roots of $G_2$ can be partitioned in at most two parts.

\subsubsection{Type $F_4$}
Order simple roots as in \cref{fig:F4_string}.
We again consider the basis $\{ \beta_i \coloneqq \alpha_1 + \dots + \alpha_i \mid 1 \le i \le 4\}$.
The roots $\beta_1, \beta_2$ and $\beta_3$ generate a subsystem of $\Delta$ of type $B_3$,
\begin{figure}[h]
    \centering
    \includegraphics[scale = 0.7]{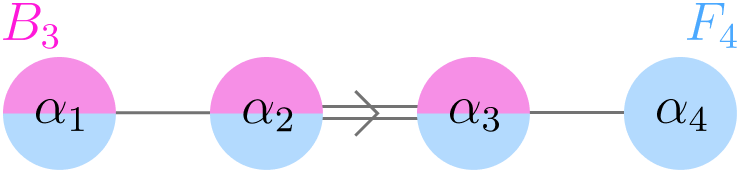}
    \caption{Subsystem $B_3$ inside $F_4$.}
    \label{fig:F4_string}
\end{figure}
which cannot be decomposed into more than $2$ parts.
Therefore, the graph of a $3$-regular partition of $\Delta$ with respect to the basis $\{\beta_i \}_1^4$ will necessarily have two vertices connected by the single edge $\beta_4$. Swapping the signs and re-numbering $\Delta_i$'s we may assume that $\beta_4 \in \Delta_1$ and $-\beta_4 \in \Delta_3$. 

If there is $1 \le i \le 2$ with $-\beta_i \in \Delta_2$, then we get a contradiction by writing
$$
\Delta_1 \ni \beta_4 = (2 \beta_4 - \beta_i) + (\beta_i - \beta_4) \in \Delta_2 \sqcup \Delta_3.
$$
Otherwise $-\beta_3 \in \Delta_2$ and the root $-\beta_4 - \beta_3 + \beta_i$, $1 \le i \le 2$, cannot be placed in $\Delta_2$ or $\Delta_3$.

\begin{corollary}
    Root system $F_4$ has no regular partitions into $m \ge 3$ parts.
\end{corollary}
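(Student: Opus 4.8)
The plan is to deduce the corollary from the $F_4$-specific analysis carried out in the paragraphs immediately above, after first reducing to the case $m = 3$. As in the proof of \cref{prop:D_n_partition}, if $\Delta = \bigsqcup_{i=1}^{m}\Delta_i$ is a regular partition of $F_4$ with $m \ge 4$, then merging $\Delta_3, \dots, \Delta_m$ into the single closed subset $\Delta_3 \sqcup \cdots \sqcup \Delta_m$ yields a $3$-regular partition; since we only need to reach a contradiction, it is enough to rule out $m = 3$.

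So suppose $F_4 = \Delta_1 \sqcup \Delta_2 \sqcup \Delta_3$ is a regular partition and form the associated graph $G$ with respect to the basis $\{\beta_i \coloneqq \alpha_1 + \cdots + \alpha_i\}_{i=1}^{4}$. The roots $\pm\beta_1, \pm\beta_2, \pm\beta_3$ span a subsystem of type $B_3$ (\cref{fig:F4_string}), and intersecting the partition with it produces a regular partition of $B_3$, because $\Delta_i \cap B_3$ and its pairwise unions stay closed inside $B_3$. By the non-existence of $(\ge 3)$-regular partitions of $B_3$ established above, the six roots $\pm\beta_1, \pm\beta_2, \pm\beta_3$ are confined to at most two of the parts, say $\Delta_1$ and $\Delta_2$; in particular none of them lies in $\Delta_3$. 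Using Properties A1--A5 (which force $G$ into the shape of \cref{fig:graph_partition_A}) one now sees that $\beta_4$ must occur in $G$ as a single edge joining vertex $3$ to one of the other two, and after the usual re-numbering and sign swap we may assume $\beta_4 \in \Delta_1$ and $-\beta_4 \in \Delta_3$. Now the two short root identities already displayed finish the job. If $-\beta_i \in \Delta_2$ for some $i \le 2$, then $\beta_4 = (2\beta_4 - \beta_i) + (\beta_i - \beta_4) \in \Delta_2 \sqcup \Delta_3$, contradicting $\beta_4 \in \Delta_1$. Otherwise $-\beta_1, -\beta_2 \in \Delta_1$ and $-\beta_3 \in \Delta_2$, and the root $-\beta_4 - \beta_3 + \beta_i$ (for $i \le 2$) has no admissible home: membership in $\Delta_1$ would, via $-\beta_3 = (-\beta_4 - \beta_3 + \beta_i) + (\beta_4 - \beta_i)$ with $\beta_4 - \beta_i \in \Delta_1$, force $-\beta_3 \in \Delta_1$, while membership in $\Delta_2$ or $\Delta_3$ is excluded by the analogous one-line identities. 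This exhausts the cases, so $F_4$ has no $3$-regular partition, hence none with $m \ge 3$.

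The only step that needs genuine care is the passage to the $B_3$ subsystem: one must verify that the restricted partition is indeed regular (so that the earlier $B_3$ result applies) and then extract that $\pm\beta_1, \pm\beta_2, \pm\beta_3$ all live in two parts --- this is exactly what pins down the shape of $G$ and isolates $\beta_4$ as a single edge, making the case analysis possible. Everything afterwards is the elementary root arithmetic reproduced above, so I anticipate no further obstacle; the same device, with the $B_3$ subsystem replaced by the appropriate $B$- or $D$-type subsystem, is also what will dispose of the remaining exceptional root systems.
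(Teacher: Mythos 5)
Your argument is correct and follows the paper's own proof essentially verbatim: reduce to $m=3$, use the $B_3$ subsystem spanned by $\beta_1,\beta_2,\beta_3$ to force $\beta_4$ to be the unique edge at the third vertex, and then derive a contradiction from the same two root identities involving $2\beta_4-\beta_i$ and $-\beta_4-\beta_3+\beta_i$. The only differences are expository: you spell out the reduction to $m=3$ and the regularity of the restriction to $B_3$, which the paper leaves implicit.
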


\subsubsection{Type $E_n$}
Let us order the simple roots as it is shown in \cref{fig:En_string}.
\begin{figure}[h]
    \centering
    \includegraphics[scale = 0.7]{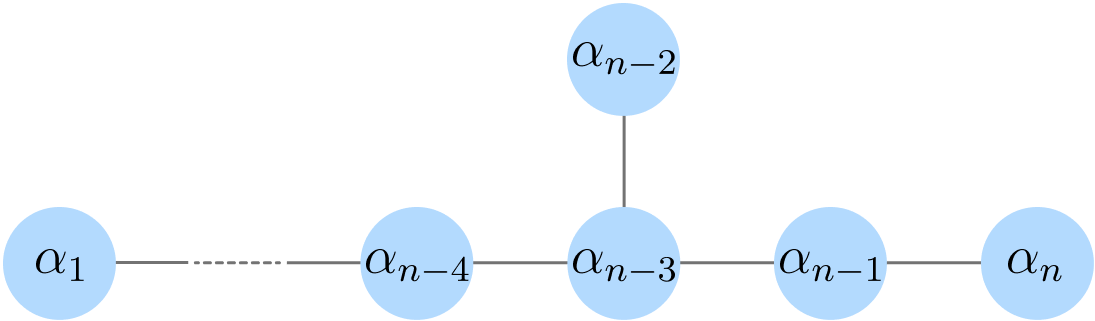}
    \caption{Ordering of simple roots in $E_n$ .}
    \label{fig:En_string}
\end{figure}
Consider the following basis for $\Delta$:
\begin{align*}
    \gamma_{i} &\coloneqq \alpha_1 + \dots + \alpha_i, \ 1 \le i \le n-2, \\
    \gamma_{n-1} &\coloneqq \alpha_1 + \dots + \alpha_{n-3} + \alpha_{n-1}, \\
    \gamma_{n} &\coloneqq \gamma_{n-1} + \alpha_n. 
\end{align*}
Subsystems of $E_n$ spanned by the sets $\{ \gamma_1, \dots, \gamma_{n-2}, \gamma_{n-1} \}$ and
$\{ \gamma_1, \dots, \gamma_{n-2}, \gamma_{n} \}$ are both equivalent to $D_{n-1}$ and, consequently, can be decomposed in at most two parts by \cref{prop:D_n_partition}.
As a result, up to changing the signs and re-numbering $\Delta_i$, the graph of a $3$-regular partition of $\Delta$ with respect to $\{ \gamma_i\}_1^n$ will be of the form
\begin{figure}[H]
    \centering
    \includegraphics[scale = 0.7]{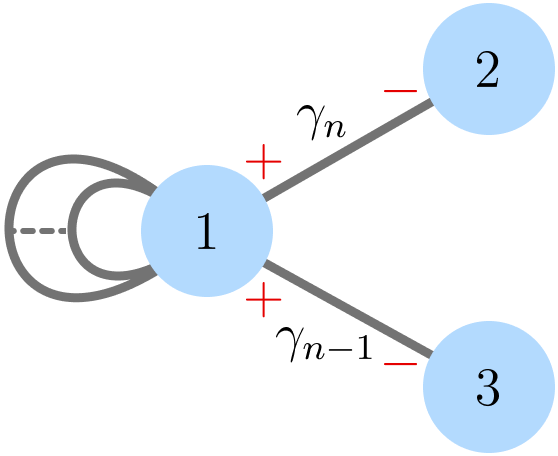}
\end{figure}
\noindent However, this is again impossible.
In all three cases, the root
\begin{align*}
    \gamma &\coloneqq -\gamma_n - \gamma_{n-1} - \gamma_{n-2} + \gamma_{n-3} + \gamma_{n-4} + \gamma_{n-5} \\ & \ = 
(-\gamma_n - \gamma_{n-2} + \gamma_{n-3} + \gamma_{n-4}) + (-\gamma_{n-1} + \gamma_{n-5})
\end{align*}
must lie inside $\Delta_2 \sqcup \Delta_3$.
This is impossible since none of these two contaiments is compatible with the equalities
\begin{align*}
    &\gamma + \underbrace{(\gamma_n + \gamma_{n-2} - \gamma_{n-3} - \gamma_{n-4})}_{\in \Delta_1} = \underbrace{-\gamma_{n-1} + \gamma_{n-5}}_{\in \Delta_2}, \\
    &\gamma + \underbrace{(\gamma_{n-1} - \gamma_{n-5})}_{\in \Delta_1} = \underbrace{-\gamma_n - \gamma_{n-2} + \gamma_{n-3} + \gamma_{n-4}}_{\in \Delta_3}.
\end{align*}
This observation results in the following statement.
\begin{corollary}
    Root system $E_n$ has no regular partitions into $m \ge 3$ parts.
\end{corollary}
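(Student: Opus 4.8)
The plan is to imitate the treatment of the $F_4$ case: locate inside $E_n$ two subsystems of a type already excluded, use them to pin down the graph of a hypothetical $3$-regular partition, and then exhibit a single root whose placement is self-contradictory.

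First I would reduce to $m=3$: merging two blocks $\Delta_i\sqcup\Delta_j$ of a regular partition produces a regular partition with one fewer part, so it suffices to rule out $3$-regular partitions of $E_n$ for $n\in\{6,7,8\}$. Order the simple roots as in \cref{fig:En_string} and work with the basis $\{\gamma_i\}_1^n$ defined there. The subsystems $R_1=\langle\gamma_1,\dots,\gamma_{n-2},\gamma_{n-1}\rangle$ and $R_2=\langle\gamma_1,\dots,\gamma_{n-2},\gamma_n\rangle$ are both of type $D_{n-1}$ (with $n-1\ge 4$), and any $3$-regular partition of $E_n$ restricts to regular partitions of $R_1$ and of $R_2$. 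By \cref{prop:D_n_partition} each restriction has at most two parts, so the induced subgraphs $G_1,G_2\subseteq G$ (relative to the two $D_{n-1}$-bases) each have at most two vertices.

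Next I would pin down the shape of $G$. Since $\gamma_1,\dots,\gamma_{n-2}$ are consecutive partial sums of simple roots, $G$ still obeys Properties A1--A5 on that part; together with the fact that $G_1$ and $G_2$ collapse to at most two vertices while $G$ itself has three, the graph of a $3$-regular partition of $E_n$ with respect to $\{\gamma_i\}$ is forced — after swapping sign-labels and renumbering the $\Delta_i$ — into one of the few shapes displayed just before this corollary, in each of which the edges $\gamma_{n-1}$ and $\gamma_n$ carry the ``third'' block. Then, for $n\ge 6$, the element
\[
\gamma \;=\; -\gamma_n-\gamma_{n-1}-\gamma_{n-2}+\gamma_{n-3}+\gamma_{n-4}+\gamma_{n-5}
\;=\; (-\gamma_n-\gamma_{n-2}+\gamma_{n-3}+\gamma_{n-4})+(-\gamma_{n-1}+\gamma_{n-5})
\]
is a root of $E_n$, and in each admissible graph the two bracketed roots on the right lie in $\Delta_2\sqcup\Delta_3$, forcing $\gamma\in\Delta_2\sqcup\Delta_3$. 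But the two identities
\[
\gamma+(\gamma_n+\gamma_{n-2}-\gamma_{n-3}-\gamma_{n-4})=-\gamma_{n-1}+\gamma_{n-5},\qquad
\gamma+(\gamma_{n-1}-\gamma_{n-5})=-\gamma_n-\gamma_{n-2}+\gamma_{n-3}+\gamma_{n-4},
\]
in which the added roots lie in $\Delta_1$, then push $\gamma$ out of both $\Delta_2$ and $\Delta_3$ — a contradiction. Hence $E_n$ has no $3$-regular, and therefore no $(m\ge 3)$-regular, partition.

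I expect the only genuine work to be bookkeeping: verifying that $\gamma$ and each of the auxiliary combinations $\pm(\gamma_n+\gamma_{n-2}-\gamma_{n-3}-\gamma_{n-4})$, $\pm(\gamma_{n-1}-\gamma_{n-5})$ (and the two summands of $\gamma$) are honest roots of $E_n$, not merely of the $A$- or $D$-type sublattices, and checking that this single $\gamma$ handles all the graph shapes uniformly across $E_6$, $E_7$ and $E_8$. Choosing that one universal root is the clever point; everything else is the same ``chase a forbidden root'' mechanism already used for $D_n$ and $F_4$.
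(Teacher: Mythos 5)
Your proposal is correct and follows essentially the same route as the paper: reduce to $m=3$, use the two $D_{n-1}$ subsystems spanned by $\{\gamma_1,\dots,\gamma_{n-2},\gamma_{n-1}\}$ and $\{\gamma_1,\dots,\gamma_{n-2},\gamma_n\}$ together with \cref{prop:D_n_partition} to constrain the graph, and then derive a contradiction from the placement of the very same root $\gamma=-\gamma_n-\gamma_{n-1}-\gamma_{n-2}+\gamma_{n-3}+\gamma_{n-4}+\gamma_{n-5}$ via the same two identities. No substantive difference from the paper's argument.
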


\section{Regular decompositions of simple Lie algebras}
It was shown in \cref{prop:2_partititons}
that any $2$-regular partition of a finite-root system $\Delta$ can be extended to a $2$-regular decomposition of the corresponding Lie algebra $\fg(\Delta)$.
We now generalise this statement.

\begin{proposition}%
\label{prop:m_partititons}
    Given an $m$-regular partition $\Delta = S_1 \sqcup \dots \sqcup S_m$ with $m \ge 3$, we can find subspaces $\fs_1, \dots, \fs_m \subseteq \fh$ (not unique in general) such that
    $$
    \fg = \bigoplus_{i=1}^m \left( \fs_i \bigoplus_{\alpha \in S_i} \fg_\alpha \right)
    $$ 
    is a regular decomposition.
    Conversely, by forgetting the Cartan part of an $m$-regular decomposition of $\fg$ we obtain an $m$-regular partition of $\Delta$.
\end{proposition}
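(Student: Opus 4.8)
The converse is immediate and parallels the argument in \cref{prop:2_partititons}: in a regular decomposition $\fg=\bigoplus_{i=1}^m\bigl(\fs_i\oplus\bigoplus_{\alpha\in S_i}\fg_\alpha\bigr)$ the directness of the sum forces every root space $\fg_\alpha$ into a single summand, so the $S_i$ do partition $\Delta$; and since $[E_\alpha,E_\beta]\in\fg_{\alpha+\beta}$, the requirement that $\fg_i$ (respectively $\fg_i\oplus\fg_j$) be a subalgebra forces $S_i$ (respectively $S_i\sqcup S_j$) to be closed. Hence the plan is to prove only the first assertion, i.e.\ to produce the Cartan parts $\fs_i$.

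For the forward direction I would first reduce to a normal form. By \cref{prop:m_regular_An}, since $m\ge 3$ the system $\Delta$ is of type $A_n$ and, up to swapping positive and negative roots and the action of $W(\Delta)$, the partition has the explicit shape stated there: there is a partition $\{0,1,\dots,n\}=I_1\sqcup\dots\sqcup I_m$ with $0\in I_1$ and $|I_\ell|=\lambda_\ell$ such that, writing $\beta_0\coloneqq 0$ and $\beta_i\coloneqq\alpha_1+\dots+\alpha_i$, one has $S_\ell=\{-\beta_i+\beta_j\mid i\in I_\ell,\ 0\le j\ne i\le n\}$. Both equivalences are induced by automorphisms of $\fg$ that fix $\fh$ setwise — an inner automorphism realising the Weyl element for the first, the Chevalley involution $E_\alpha\mapsto -E_{-\alpha}$, $h\mapsto -h$ on $\fh$, for the second — and such automorphisms carry regular decompositions to regular decompositions. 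So it is enough to build the $\fs_i$ for this normal form and transport them back along the inverse automorphism.

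Now I would realise $\fg$ as $\mathfrak{sl}(n+1,F)$ with $H_i\coloneqq E_{1,1}-E_{i+1,i+1}$ and $H_0\coloneqq 0$, so that $H_{-\beta_i+\beta_j}=H_j-H_i$, and simply set
$$
\fs_\ell\coloneqq\textnormal{span}_F\{H_i\mid i\in I_\ell\},\qquad 1\le\ell\le m.
$$
Three verifications then remain, all short. First, $\fh=\bigoplus_{\ell=1}^m\fs_\ell$, because the sets $I_\ell\cap\{1,\dots,n\}$ partition $\{1,\dots,n\}$ and $\{H_1,\dots,H_n\}$ is a basis of $\fh$. Second, each $\fg_\ell=\fs_\ell\oplus\bigoplus_{\alpha\in S_\ell}\fg_\alpha$ is a subalgebra: brackets inside $\fs_\ell$ vanish, $[\fs_\ell,E_\alpha]\subseteq\fg_\alpha$ for $\alpha\in S_\ell$, one has $[E_\alpha,E_\beta]\in\fg_{\alpha+\beta}\subseteq\fg_\ell$ whenever $\alpha+\beta\in\Delta$ since $S_\ell$ is closed, and the only remaining brackets $[E_\alpha,E_{-\alpha}]=H_\alpha$ with $\alpha,-\alpha\in S_\ell$ come from $\alpha=-\beta_i+\beta_j$ with $i,j\in I_\ell$, whence $H_\alpha=H_j-H_i\in\fs_\ell$. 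Third, each $\fg_\ell\oplus\fg_{\ell'}$ is a subalgebra by the same computation applied to the closed set $S_\ell\sqcup S_{\ell'}$: the only new brackets $[E_\alpha,E_{-\alpha}]=H_\alpha$ arise from $\alpha=-\beta_i+\beta_j$ with $i\in I_\ell$, $j\in I_{\ell'}$, and then $H_\alpha=H_j-H_i\in\fs_\ell\oplus\fs_{\ell'}$.

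The two points that need genuine care — neither a real obstacle — are the translation of ``$\fg_\ell$ is a subalgebra'' into the single Cartan containment $\langle H_\alpha\mid\alpha,-\alpha\in S_\ell\rangle\subseteq\fs_\ell$ (and its analogue for each pair $\ell\ne\ell'$), which is precisely the bracket bookkeeping above, and the remark that the equivalences permitted by \cref{prop:m_regular_An} lift to automorphisms of $\fg$ fixing $\fh$ setwise, so that replacing the given partition by its normal form loses nothing. All the substantive work has already been done in the classification \cref{prop:m_regular_An}.
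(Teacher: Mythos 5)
Your proposal is correct and follows essentially the same route as the paper: the paper also reduces to type $A_n$, takes the finest (row) partition in its normal form, and assigns to the piece $\Delta_i=\{-\beta_i+\beta_j\}$ the Cartan element $H_{\beta_i}$ (with $H_{\beta_0}=0$), which is exactly your $\fs_\ell=\textnormal{span}_F\{H_i\mid i\in I_\ell\}$ after gluing. You merely spell out the bracket verifications and the transport along Weyl-group/Chevalley automorphisms that the paper leaves implicit.
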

\begin{proof}
    To prove the statement it is enough to consider $\Delta$ of type $A_n$, $n \ge 2$, because regular partitions into $m \ge 3$ parts do not occur in other cases.
    By \cref{cor:any_decomposition_extends} any $m$-regular partition of $A_n$ can be obtained from one of the two finest ones
    $$\Delta_i = \pm \{-\beta_i + \beta_j \mid 0 \le i \neq j \le n \}, \ \ 0 \le i \le n$$
    by gluing together different elements $\Delta_i$.
    Each of these maximal partitions can be extended to an $(n+1)$-regular decomposition of $\fg$ as follows:
    \begin{align*}
        \fg_i &\coloneqq \textnormal{span}_F \{ E_{\alpha}, H_{\beta_i} \mid \alpha \in \Delta_i \}, \ \ 0 \le i \le n,  
    \end{align*}
where $H_{\sum c_i \alpha_i} \coloneqq \sum c_i H_{\alpha_i}$.
\end{proof}

It is clear that the distribution of Cartan elements in the proof above is not unique. 
For example, 
\begin{align*}
        \fg_i &\coloneqq \textnormal{span}_F \{ E_{\alpha}, H_{\beta_n - \beta_i} \mid \alpha \in \Delta_i \}, \ \ 0 \le i \le n
\end{align*}
is another possible extension.
Nevertheless, the Cartan part of $\fg$ must be distributed in a very restrictive manner. 
In particular, we can note that both extension above are the same up to the action of the Weyl group $W(A_n)$.

We now set $\fg = \mathfrak{sl}(n+1, F)$, $n \ge 2$ and show how $\mathfrak{h}$ can be placed inside an $m$-regular distribution.

\begin{lemma}%
\label{lem:one_Cartan_two_pieces}
Let $\fg = \oplus_{1}^m \fg_i$ be a regular decomposition of $\fg$.
If $\fh \subset \fg_j$ for some index $j$, then $m \le 2$.
\end{lemma}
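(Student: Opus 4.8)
The plan is to show that if the full Cartan subalgebra $\fh$ sits inside a single summand $\fg_j$, then the remaining summands can only detect a $2$-regular partition of the root system $\Delta = A_n$, so that $m \le 2$. First I would observe that since $\fh \subseteq \fg_j$, every other summand $\fg_i$ with $i \neq j$ has trivial Cartan part, i.e.\ $\fg_i = \bigoplus_{\alpha \in S_i} \fg_\alpha$, where $\Delta = \sqcup_{i=1}^m S_i$ is the induced $m$-regular partition. The key point is that for $i \neq j$, the set $S_i$ must be \emph{antisymmetric}: if $\alpha, -\alpha \in S_i$ then $[E_\alpha, E_{-\alpha}] = H_\alpha \in \fg_i \cap \fh$, contradicting $\fh \subseteq \fg_j$ and directness of the sum (here I use the chosen basis from the Preliminaries, where $[E_\alpha, E_{-\alpha}] = H_\alpha \neq 0$). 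So each $S_i$ with $i \neq j$ contains at most one of $\alpha, -\alpha$ for every root $\alpha$.

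Next I would use the structure theory already established. By \cref{prop:m_partititons} and \cref{prop:m_regular_An} (via \cref{cor:any_decomposition_extends}), any $m$-regular partition of $A_n$ with $m \ge 3$ is obtained from one of the finest partitions \cref{eq:finest_An} by merging blocks, and its associated graph $G$ has the form of \cref{fig:graph_partition_A}: a single central vertex, say vertex $1$, carrying all the loops, with all other vertices attached to it by edges. Loops at vertex $c$ correspond precisely to roots $\beta_\ell$ with $\pm\beta_\ell \in \Delta_c$, i.e.\ to a symmetric pair living in the block $\Delta_c$. Since every block $S_i$ with $i \neq j$ is antisymmetric, no such block can carry a loop; hence the only block allowed to be the central vertex is $\Delta_j$ itself. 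But in a partition with $m \ge 3$ parts, the central block $\Delta_1$ in \cref{fig:graph_partition_A} is forced to contain symmetric pairs unless essentially all of $\Delta$ collapses — more precisely, as soon as $m \ge 3$ there are at least two peripheral vertices, each peripheral block $S_i$ is nonempty and antisymmetric, and closedness together with Property A5 forces symmetric pairs into the central block. I would make this last implication explicit: take $i, k \neq j$ distinct peripheral blocks with, say, $-\beta_i \in S_i$ and $-\beta_k \in S_k$; then $\beta_i \in S_j$ and $\beta_k \in S_j$ (the only block that is not forced antisymmetric), and since $\beta_i - \beta_k$ is a root of $A_n$, closedness of $S_j \sqcup S_i$ versus $S_j \sqcup S_k$ pins down where $\beta_i - \beta_k$ and its negative go, ultimately forcing a symmetric pair into $S_j$ — which is fine — but then one also produces a symmetric pair forced into a peripheral block, the contradiction.

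The cleaner route, which I would actually write, is to argue directly from the graph: in the canonical form of \cref{fig:graph_partition_A}, combined with \cref{prop:m_regular_An}, the block $\Delta_1$ always contains $\beta_\ell$ for some $\ell$ with $\lambda_1 \ge 1$ and, because $m \ge 3$ means $\lambda$ has at least three parts so $\lambda_1 < n+1$ leaves roots $-\beta_i + \beta_{i'}$ with both indices $< \lambda_1$ forming a symmetric $A_{\lambda_1 - 1}$ subsystem inside $\Delta_1$ whenever $\lambda_1 \ge 2$; and if $\lambda_1 = 1$ then $\Delta_1 = \{\beta_j \mid 1 \le j \le n\}$ is antisymmetric, so $\fh \subseteq \fg_1$ is impossible and $\fh$ must lie in some $\fg_i$, $i \ge 2$, whose block $\Delta_i = \{-\beta_i+\beta_{i'}\}$ contains the symmetric pair $\pm(\beta_{i'} - \beta_{i})$ only when that block has size $\ge 2$, which it does for $m \ge 3$. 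Either way a symmetric pair lands in the block containing $\fh$ — consistent — but \emph{also} the complementary blocks are forced to stay antisymmetric only if $m \le 2$; for $m \ge 3$ one of them inevitably receives a symmetric pair by the same closedness computation as in the proof of Property A1, giving $H_\alpha \in \fg_i \cap \fh$ for $i \neq j$. This contradiction forces $m \le 2$.

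\textbf{Main obstacle.} The delicate part is the bookkeeping in the last paragraph: turning "$\fh \subseteq \fg_j$ forces every other block antisymmetric" into "no $m$-regular partition with $m \ge 3$ has all-but-one blocks antisymmetric." I expect the cleanest argument is to note that in the finest partitions \cref{eq:finest_An} \emph{every} block $\Delta_k$ with $k \neq 0$ of size $\ge 2$ contains a symmetric pair (namely $\pm(\beta_{k'} - \beta_k)$ for any other index $k'$ in that row), and block $\Delta_0$ is antisymmetric; merging blocks only enlarges them, so after any merge at most one resulting block — the one containing the old $\Delta_0$ — can be antisymmetric, and that happens only if we never merge $\Delta_0$ with anything of size contributing a symmetric pair, i.e.\ only in the $2$-block case. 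Making that monotonicity statement airtight is the crux; the rest is the routine closedness manipulations already rehearsed in Properties A1--A5.
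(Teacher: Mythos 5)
Your opening observation is correct and is also the germ of the paper's argument: since $\fh \subseteq \fg_j$ and the sum is direct, every other summand has trivial Cartan part, and if $\alpha,-\alpha \in S_i$ for some $i \neq j$ then $H_\alpha = [E_\alpha,E_{-\alpha}] \in \fg_i \cap \fh = 0$, a contradiction, so each peripheral block is antisymmetric. But the contradiction you then try to extract from the classification of $(m\ge 3)$-regular partitions does not exist, because your key ``monotonicity'' claim is false. In the finest partition \cref{eq:finest_An} the block $\Delta_k = \{-\beta_k+\beta_{k'} \mid k' \neq k\}$ does \emph{not} contain the symmetric pair $\pm(\beta_{k'}-\beta_k)$: the root $-\beta_k+\beta_{k'}$ lies in $\Delta_k$, but its negative $-\beta_{k'}+\beta_k$ lies in $\Delta_{k'}$. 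Indeed \emph{every} block of the finest partition is antisymmetric (a block merged from several $\Delta_k$'s acquires symmetric pairs, a singleton block never does), so for $m \ge 3$ there are perfectly good regular partitions of $\Delta$ in which all, or all but one, of the blocks are antisymmetric — the finest $(n+1)$-partition itself being the extreme example. Antisymmetry of the peripheral blocks therefore cannot contradict the root-level classification; the obstruction you are after is invisible at the level of $\Delta$, since $\alpha + (-\alpha) = 0$ is not a root and closedness of $S_i \sqcup S_{i'}$ says nothing about such pairs.

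The missing idea is the \emph{cross-summand} version of your bracket computation, which is where the hypothesis that $\fg_i \oplus \fg_{i'}$ is a subalgebra (and not merely each $\fg_i$) enters. If $E_\alpha \in \fg_i$ and $E_{-\alpha} \in \fg_{i'}$ with $i \neq i'$ and \emph{both} different from $j$, then $H_\alpha = [E_\alpha,E_{-\alpha}] \in \fg_i \oplus \fg_{i'}$, while also $H_\alpha \in \fh \subseteq \fg_j$ — a contradiction with directness. Hence for every root $\alpha$ at least one of $E_\alpha, E_{-\alpha}$ lies in $\fg_j$, i.e.\ $\Delta_j \cup (-\Delta_j) = \Delta$ and $\fg_j$ contains a parabolic subalgebra. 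This is how the paper proceeds: after conjugating so that $\fn_+ \subseteq \fg_j$, the root vector $E_{-\alpha_0}$ for the maximal root $\alpha_0$ lies in some single summand, and bracketing it repeatedly with elements of $\fg_j$ generates all of $\fn_-$ inside that summand plus $\fg_j$, forcing $m \le 2$. Without this cross-summand step (or an equivalent substitute) your argument cannot close, and the final ``monotonicity'' paragraph as written proves a false statement.
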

\begin{proof}
For simplicity, assume $\fh \subset \fg_1$.
Take a root $\alpha$ and let $1 \le i, j \le m$ be two indices such that $$E_\alpha \in \fg_i \ \text{ and } \ E_{-\alpha} \in \fg_j.$$ 
On the one hand $[E_\alpha, E_{-\alpha}] \in \fh \subseteq \fg_1$, on the other hand $[E_\alpha, E_{-\alpha}] \in \fg_i \oplus \fg_j$.
Therefore, at least one of the indices $i$ or $j$ is equal to $1$. 
In other words, for each root $\alpha$ either $E_\alpha \in \fg_1$ or $E_{-\alpha} \in \fg_1$. 
By \cite[Proposition 20]{bourbaki_root_systems} this is equivalent to saying that $\fg_1$ is a parabolic subalgebra of $\fg$.
Moreover, acting on $\fg$ by its Weyl group, we can achieve that $$\mathfrak{n}_+ \coloneqq \bigoplus_{\alpha \in \Delta_+}\fg_\alpha 
\subset \fg_1.$$
\noindent
Let $\alpha_0$ be the maximal root of $\Delta$.
One of the following conditions must hold:
\begin{enumerate}
    \item $E_{-\alpha_0} \in \fg_1$, implying that $\fg = \fg_1$, or
    \item $E_{-\alpha_0} \in \fg_2$, producing the identity $\fg = \fg_1 \oplus \fg_2$.
\end{enumerate}
Here we have used the fact, that we can get any $E_{-\gamma} \in \mathfrak{n}_-$ by repeatedly commuting $E_{-\alpha_0}$ with $E_\alpha \in \fg_1$.
\end{proof}

\begin{corollary}%
\label{cor:nk=nn_nn+1}
For a regular decomposition of type $(m,k)$ we have $k \le \textnormal{rank}(\fg)$ and either 
$$
(m,k) = (k+1, k) \  \text{ or } \ (m,k) = (k,k).
$$
\end{corollary}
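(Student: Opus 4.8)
The plan is to read off from the decomposition a vector-space decomposition $\fh = \bigoplus_{i=1}^m \fs_i$ of the Cartan subalgebra, and then to prove that at most one of the summands $\fg_i$ can have $\fs_i = 0$. Together these two facts yield the corollary at once.

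For the decomposition of $\fh$, I would project the identity $\fg = \bigoplus_{i=1}^m \fg_i$ onto $\fh$ along $\bigoplus_{\alpha \in \Delta}\fg_\alpha$; since $\fg_i = \fs_i \oplus \bigoplus_{\alpha \in \Delta_i}\fg_\alpha$, the image of $\fg_i$ is $\fs_i$, so $\fh = \sum_{i=1}^m \fs_i$. Counting dimensions, $\sum_i \dim \fg_i = \dim \fg$ together with $\sum_i |\Delta_i| = |\Delta|$ forces $\sum_i \dim \fs_i = \dim \fh$, so the sum $\fh = \bigoplus_i \fs_i$ is direct. In particular the $k$ indices with $\fs_i \neq 0$ each contribute at least $1$ to $\dim \fh = \mathrm{rank}(\fg)$, which gives $k \le \mathrm{rank}(\fg)$; moreover the Cartan part of $\fg_i \oplus \fg_j$ is exactly $\fs_i \oplus \fs_j$.

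The heart of the argument is to show that no two distinct summands $\fg_a, \fg_b$ can both have vanishing Cartan part. If $\fs_a = \fs_b = 0$, then $\fg_a \oplus \fg_b = \bigoplus_{\alpha \in \Delta_a \sqcup \Delta_b}\fg_\alpha$ is a subalgebra with zero Cartan part, so $\Delta_a \sqcup \Delta_b$ contains no pair $\{\gamma, -\gamma\}$: otherwise $[E_\gamma, E_{-\gamma}] = H_\gamma \neq 0$ would be a Cartan element lying in $\fg_a \oplus \fg_b$. For $m = 2$ this is immediately absurd, as $\Delta_a \sqcup \Delta_b = \Delta$ is symmetric. For $m \ge 3$, \cref{prop:m_regular_An} shows that $\fg$ is of type $A_n$ and that, after applying a suitable element of $W(\Delta)$ and, if necessary, the swap of positive and negative roots — transformations induced by automorphisms of $\fg$ stabilizing $\fh$, hence not affecting the vanishing of any $\fs_i$ — the partition $\{\Delta_i\}$ becomes a coarsening of the row partition, whose blocks are $R_t = \{\beta_j - \beta_t \mid 0 \le j \neq t \le n\}$ for $0 \le t \le n$ (with $\beta_0 = 0$). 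Then $\Delta_a$ and $\Delta_b$ are unions of such blocks, and since $a \neq b$ we may choose $t$ with $R_t \subseteq \Delta_a$ and $t'$ with $R_{t'} \subseteq \Delta_b$ and $t \neq t'$; now $\beta_{t'} - \beta_t \in \Delta_a$ while $\beta_t - \beta_{t'} = -(\beta_{t'} - \beta_t) \in \Delta_b$, contradicting the absence of a $\pm$-pair.

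It remains to combine the pieces: the previous step gives $m - k \le 1$, while $k \le m$ is trivial and $k \ge 1$ because $\fh \neq 0$, so, since $m \ge 2$, either $k = m$ and $(m,k) = (k,k)$, or $k = m-1$ and $(m,k) = (k+1,k)$. I expect the main obstacle to be the core step in the case $m \ge 3$: it genuinely uses the explicit classification of regular partitions of $A_n$ from \cref{prop:m_regular_An}, together with the simple but essential observation that any two distinct blocks of a coarsened row partition jointly contain a pair of opposite roots.
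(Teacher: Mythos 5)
Your proof is correct, but it takes a genuinely different route from the paper's. The paper's proof is a two-line reduction: it collects all summands with nonzero Cartan part into a single regular subalgebra $\fg_1' = \fg_1 \oplus \dots \oplus \fg_k$, notes that $\fh \subset \fg_1'$, and applies \cref{lem:one_Cartan_two_pieces} to the coarsened splitting $\fg = \fg_1' \oplus \bigoplus_{i>k}\fg_i$, giving $1 + (m-k) \le 2$. That lemma is itself proved by a general structural argument: a summand containing all of $\fh$ must be parabolic, hence (after acting by the Weyl group) contains $\fn_+$, and a single further summand containing $E_{-\alpha_0}$ already generates everything else. You instead bound the number of summands with \emph{zero} Cartan part by one: the $m=2$ case via the symmetry of $\Delta$, and the $m\ge 3$ case via the classification of regular partitions (\cref{cor:any_decomposition_extends}, \cref{prop:m_regular_An}), using that two distinct blocks of a coarsened row (or column) partition always contain an opposite pair $\pm\gamma$ and therefore force a nonzero Cartan contribution $H_\gamma$. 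Both arguments are sound. Yours leans on the classification already established in Section 3 and is necessarily specific to type $A_n$ when $m\ge 3$, while the paper's \cref{lem:one_Cartan_two_pieces} works uniformly for any simple $\fg$ and is reused elsewhere; on the other hand, your write-up makes explicit the directness of $\fh = \bigoplus_i \fs_i$ and hence the bound $k \le \mathrm{rank}(\fg)$, which the paper's proof leaves implicit.
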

\begin{proof}
    Indeed, if $\fs_1, \dots, \fs_k \neq 0$, put $\fg_1' \coloneqq \fg_1 \oplus \dots \oplus \fg_k$ and apply \cref{lem:one_Cartan_two_pieces} to the regular splitting
    $$
    \fg = \fg_1' \bigoplus_{i = k+1}^m \fg_i .
    $$
\end{proof}

\begin{proposition}%
\label{prop:k_k_1_decomposition}
  Up to swapping positive and negative roots, re-numbering $\fg_i$ and the action of the Weyl group $W(A_n)$, any $(k+1, k)$-regular decomposition $\fg = \oplus_{1}^{k+1} \fg_i$ with $k \ge 2$ is of the form
  \begin{equation*}
        \begin{aligned}
            \fg_\ell &= \textnormal{span}_F \Bigg\{E_{-\beta_i + \beta_j}, H_{-\beta_i + \beta_n} \bigg| \sum_{t=1}^{\ell-1} \lambda_t \le i < \sum_{t=1}^{\ell} \lambda_t, \ 0 \le j \neq i \le n  \Bigg\}, \ 0 \le \ell \le k, \\
            \fg_{k+1} &= \textnormal{span}_F\Bigg\{E_{-\beta_n + \beta_j} \mid  0 \le j \neq i \le n  \Bigg\},
        \end{aligned}
    \end{equation*}
    where $\lambda = (\lambda_1, \dots, \lambda_{k}, 1)$ is an $(k+1)$-partition of $(n+1)$.
\end{proposition}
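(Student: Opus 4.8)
The plan is to build on the root-level classification and then pin down the Cartan data. Since $m = k+1 \ge 3$, \cref{mainthm:root_partitions} forces $\Delta$ to be of type $A_n$, and by \cref{prop:m_regular_An}, after swapping positive and negative roots, renumbering the $\fg_i$ and acting by $W(A_n)$, we may assume that the underlying root partition is the standard one attached to a $(k+1)$-partition $\lambda$ of $n+1$: writing $I_1, \dots, I_{k+1}$ for the consecutive intervals in $\{0, 1, \dots, n\}$ of lengths $\lambda_1, \dots, \lambda_{k+1}$,
\[
\Delta_i = \{-\beta_a + \beta_b \mid a \in I_i,\ 0 \le b \neq a \le n\}.
\]
We remain free to reorder the parts of $\lambda$, cf.\ \cref{lem:Weyl_group_action}. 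The only remaining freedom in the decomposition is then the distribution of $\fh$ among the subspaces $\fs_i$, and the goal is to show this distribution is entirely forced.

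First I would locate the summand with vanishing Cartan part. If $|I_i| \ge 2$, pick distinct $a, a' \in I_i$; then $-\beta_a + \beta_{a'}$ and $-\beta_{a'} + \beta_a$ both lie in $\Delta_i$, so $H_{-\beta_a + \beta_{a'}} = [E_{-\beta_a + \beta_{a'}}, E_{-\beta_{a'} + \beta_a}] \in \fs_i$ and $\fs_i \neq 0$. Since the decomposition has type $(k+1, k)$, exactly one $\fs_i$ is zero, and by the above its interval $I_i$ is a singleton, so the corresponding part of $\lambda$ equals $1$. Permuting the parts, I may assume this is the last summand, so $\lambda = (\lambda_1, \dots, \lambda_k, 1)$, the Cartan-free summand is $\fg_{k+1}$ with $I_{k+1} = \{n\}$, and $\Delta_{k+1} = \{-\beta_n + \beta_b \mid 0 \le b \le n-1\}$.

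The crux is a rigidity argument using the pairwise subalgebras $\fg_\ell \oplus \fg_{k+1}$. For $1 \le \ell \le k$ and $a \in I_\ell$ (so $a \le n-1$) we have $-\beta_a + \beta_n \in \Delta_\ell$ and $-\beta_n + \beta_a \in \Delta_{k+1}$, and since $\fg_\ell \oplus \fg_{k+1}$ is closed under the bracket and $\fs_{k+1} = 0$,
\[
H_{-\beta_a + \beta_n} = [E_{-\beta_a + \beta_n}, E_{-\beta_n + \beta_a}] \in (\fg_\ell \oplus \fg_{k+1}) \cap \fh = \fs_\ell .
\]
So $\fs_\ell \supseteq \langle H_{-\beta_a + \beta_n} \mid a \in I_\ell \rangle$ for every $\ell \le k$. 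Now the $n$ vectors $H_{-\beta_a + \beta_n}$, $0 \le a \le n-1$, form a basis of $\fh$ (each equals $\sum_{j=a+1}^{n} H_{\alpha_j}$, hence they are triangular in $H_{\alpha_1}, \dots, H_{\alpha_n}$), and $I_1, \dots, I_k$ partition $\{0, \dots, n-1\}$; therefore $\sum_{\ell=1}^k \dim \fs_\ell \ge n$. But $\fh = \fs_1 \oplus \dots \oplus \fs_k$ because $\fs_{k+1} = 0$, so $\sum_{\ell=1}^k \dim \fs_\ell = n$, which forces $\fs_\ell = \langle H_{-\beta_a + \beta_n} \mid a \in I_\ell \rangle$ for all $\ell$. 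This is exactly the form asserted. I expect this dimension count — verifying that the $H_{-\beta_a + \beta_n}$ attached to distinct summands are independent, that together they span all of $\fh$, and that the Cartan-free summand genuinely contributes nothing — to be the step requiring the most care.

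Finally, for completeness I would check the converse: for any $(k+1)$-partition $(\lambda_1, \dots, \lambda_k, 1)$ of $n+1$, the subspaces written in the statement really do form a regular decomposition of type $(k+1, k)$. Their root parts assemble into the standard regular partition by \cref{cor:any_decomposition_extends}, and closure of each $\fg_i$ and each $\fg_i \oplus \fg_j$ under the bracket reduces to the same root identities read backwards, the only nonobvious brackets being those returning Cartan elements, which lie in $\fs_i \oplus \fs_j$ by construction.
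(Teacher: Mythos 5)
Your proposal is correct and follows essentially the same route as the paper: identify the unique Cartan-free summand, show its root-index set must be a singleton via the bracket $[E_{\beta_a-\beta_b},E_{\beta_b-\beta_a}]$, and then force each $\fs_\ell$ to be spanned by the elements $H_{-\beta_a+\beta_n}$, $a\in I_\ell$, using the brackets with $\fg_{k+1}$ and a dimension count on $\fh$. The only cosmetic difference is that you normalize the set partition into consecutive intervals at the start (via \cref{prop:m_regular_An}) rather than at the end.
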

\begin{proof}
    Swapping positive and negative roots if necessary and applying \cref{cor:any_decomposition_extends}
    we see that there is a set partition $\{0, 1, \dots, n \} = S_1 \sqcup \dots \sqcup S_{k+1}$ such that
    $$
      \fg_\ell = \fs_\ell \bigoplus_{i \in S_\ell} \textnormal{span}_F\{E_{-\beta_i + \beta_j} \mid 0 \le j \neq i \le n\},
    $$
    for all $1 \le \ell \le k+1$.
    By re-numbering subalgebras $\fg_\ell$ we can assume $\fs_{k+1} = \{ 0 \}$.
    This, in particular, implies that $|S_{k+1}| = 1$ or, equivalentely,
    $$\fg_{k+1} = \textnormal{span}_F\{E_{-\beta_d + \beta_j} \mid 0 \le j \neq d \le n\},$$
    for some unique $0 \le d \le n$.
    Indeed, if $a,b \in S_{k+1}$ are different integers, then 
    $E_{\beta_a - \beta_b}, E_{\beta_b - \beta_a} \in \fg_{k+1}$ and hence $H_{\beta_a - \beta_b} \in \fg_{k+1}$ contradicting $\fs_{k+1} = 0$.

    For each $0 \le j \neq d \le n$, the Cartan element $H_{\beta_d - \beta_j}$ must be contained in the subalgebra $\fg_{\ell}$ that contains the root vector $E_{\beta_d - \beta_j}$.
    Since $\dim_F \fg_{k+1} = n$, this restriction distributes the whole Cartan in a unique way.
    Such a decomposition is well-defined, because for any $0 \le a \neq b \le n$
    we can write 
    $$
    H_{\beta_a - \beta_b} = H_{\beta_d - \beta_b} - H_{\beta_d - \beta_a}.
    $$
    
    By re-numbering the first $k$ subalgebras again we can achieve inequalities $|S_1| \ge |S_2| \ge \dots \ge |S_{k+1}|=1$.
    Acting on the decomposition with the Weyl group $W(A_n)$ we can order integers in $S_i$'s in the desired way:
    $$
    S_1 = \{0, 1, \dots, |S_1| - 1 \}, \ S_2 = \{ |S_1|, \dots, |S_1| + |S_2| - 1 \}, \dots, S_{k+1} = \{n\}.
    $$

\end{proof}

\begin{remark}
    The explicit expressions for $\fg_i$ in \cref{prop:k_k_1_decomposition} are different from the ones in \cref{mainthm:classification_algebra_decompostions}.
    However, they coincide if we just revert the order of simple roots; see \cref{rem:change_of_order}.
\end{remark}

Now let us look at a $(k,k)$-regular decomposition of $\fg$.
By the same argument as in the proof above
we can find a set partition $\{0, 1, \dots, n \} = S_1 \sqcup \dots \sqcup S_{k}$ such that
\begin{equation}%
\label{eq:explicit_form_g_i}
      \fg_\ell = \fs_\ell \bigoplus_{i \in S_\ell} \textnormal{span}_F\{E_{-\beta_i + \beta_j} \mid 0 \le j \neq i \le n\},
\end{equation}
for all $1 \le \ell \le k$.
By swapping positive and negative roots, re-numbering subalgebras $\fg_\ell$ and by acting with the Weyl group $W(A_n)$, we can assume that 
$$
    S_1 = \{0, \dots, \lambda_1 - 1 \}, \ S_2 = \{\lambda_1, \dots, \lambda_1 + \lambda_2 - 1 \}, \ \hdots \ , 
    S_k = \{\sum_1^{k-1} \lambda_i, \dots, n \},
$$
for some partition $\lambda = (\lambda_1, \dots, \lambda_k)$ of $n+1$.
Since $k < n+1$ we have $\lambda_1 > 1$.
Consequently, we have the inclusions
\begin{equation}%
\label{eq:Cartan_part_second_case}
\begin{aligned}
    FH_{\beta_1} \oplus \dots \oplus FH_{\beta_{\lambda_1 - 1}} &\subseteq \fs_1, \\
    FH_{\beta_{\lambda_1}} \oplus \dots \oplus FH_{\beta_{\lambda_1 + \lambda_2 - 1}} &\subseteq \fs_1 \oplus \fs_2, \\
     &\vdots \\
    FH_{\beta_{\sum_{1}^{k-1} \lambda_i}} \oplus \dots \oplus FH_{\beta_{n}} &\subseteq \fs_1 \oplus \fs_k.
\end{aligned}
\end{equation}
If for some $2 \le i \le k$ we have
$$
\bigoplus_{\ell \in S_i} FH_{\beta_{\ell}}  \subseteq \fs_1 \subseteq \fs_1 \oplus \fs_{i},
$$
then $\fs_{i} = \{0\}$ contradicting the assumption that each $\fs_j$ has positive dimension. 
Consequently, for all $2 \le i \neq j \le k$ we can find some $\ell \in S_i$, $d \in S_j$, $x_1, x_2 \in \fs_1$, $y \in \fs_i$ and $z \in \fs_j$
such that 
$$
x_1 + y = H_{\beta_\ell}, \ x_2 + z = H_{\beta_d}.
$$
Then $$H_{\beta_\ell - \beta_d} = (x_1 - x_2) + y - z \in \fs_1 \oplus \fs_i \oplus \fs_j.$$
At the same time $H_{\beta_\ell - \beta_d} \in [FE_{-\beta_d + \beta_\ell}, FE_{-\beta_\ell + \beta_d}] \subseteq \fs_i \oplus \fs_j$.
Therefore, we have the equality $x_1 = x_2$.
This observation leads to the following statement.

\begin{proposition}%
\label{prop:k_k_decomposition}
  Up to swapping positive and negative roots, re-numbering $\fg_i$ and the action of the Weyl group $W(A_n)$, any $(k, k)$-regular decomposition $\fg = \oplus_{1}^{k} \fg_i$ with $3 \le k \le n$ is of the form
  \begin{equation*}
        \begin{aligned}
            \fg_1 &= \textnormal{span}_F \Bigg\{E_{-\beta_i + \beta_j}, H_{\beta_i}, X \bigg| 0 \le i < \lambda_1, \ 0 \le j \neq i \le n  \Bigg\}, \\
            \fg_\ell &= \textnormal{span}_F \Bigg\{E_{-\beta_i + \beta_j}, H_{\beta_i} - X \bigg| \sum_{t=1}^{\ell-1} \lambda_t \le i < \sum_{t=1}^{\ell} \lambda_t, \ 0 \le j \neq i \le n  \Bigg\}, \ 2 \le \ell \le k, \\
        \end{aligned}
    \end{equation*}
    where $\lambda = (\lambda_1, \dots, \lambda_{k})$ is a $k$-partition of $(n+1)$ with $\lambda_1 > 1$ and $X$ is an arbitrary element in the set 
    $$
    (FH_{\beta_1} \oplus \dots \oplus FH_{\beta_{\lambda_1 - 1}}) \cup \left\{ H_{\beta_p} \mid 2 \le m \le k, \ \lambda_m > 1, \ \sum_{t=1}^{m-1} \lambda_t \le p < \sum_{t=1}^{m} \lambda_t  \right\}.
    $$
\end{proposition}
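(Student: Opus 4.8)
The plan is to use everything already extracted in the paragraphs leading up to the statement and reduce the proof to a linear-algebra computation inside $\fh$. Recall the situation we are in: after applying the allowed equivalences, $\fg_\ell = \fs_\ell \oplus \bigoplus_{i \in S_\ell}\textnormal{span}_F\{E_{-\beta_i+\beta_j}\mid 0\le j\neq i\le n\}$ with $S_1=\{0,\dots,\lambda_1-1\}$ and $S_\ell=\{\sum_{t=1}^{\ell-1}\lambda_t,\dots,\sum_{t=1}^{\ell}\lambda_t-1\}$ for a $k$-partition $\lambda$ of $n+1$ with $\lambda_1>1$; each $\fs_\ell\neq 0$; the inclusions \cref{eq:Cartan_part_second_case} hold; and, as established in the computation just before the statement, the $\fs_1$-component of $H_{\beta_i}$ relative to $\fh=\fs_1\oplus\fs_2\oplus\dots\oplus\fs_k$ is one and the same element $X\in\fs_1$ for all $i\in S_2\cup\dots\cup S_k$. (Constancy within a block $S_\ell$ with $\ell\ge 2$ follows from $H_{\beta_i}-H_{\beta_j}=[E_{-\beta_i+\beta_j},E_{-\beta_j+\beta_i}]\in\fs_\ell$; constancy across blocks is exactly the equality $x_1=x_2$ proved there.)

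First I would pin down the Cartan parts. Set $V_1:=FH_{\beta_1}\oplus\dots\oplus FH_{\beta_{\lambda_1-1}}$ and $V_\ell:=\textnormal{span}_F\{H_{\beta_i}\mid i\in S_\ell\}$ for $2\le \ell\le k$, so that $\fh=V_1\oplus\dots\oplus V_k$ with $\dim V_1=\lambda_1-1$ and $\dim V_\ell=\lambda_\ell$. Put $T_1:=V_1+FX$ and $T_\ell:=\textnormal{span}_F\{H_{\beta_i}-X\mid i\in S_\ell\}$ for $\ell\ge 2$. Writing $H_{\beta_i}=X+(H_{\beta_i}-X)$ and using $X\in\fs_1$ together with $H_{\beta_i}\in\fs_1\oplus\fs_\ell$ (from \cref{eq:Cartan_part_second_case}, for $i\in S_\ell$, $\ell\ge 2$) gives $H_{\beta_i}-X\in\fs_\ell$; combined with $H_{\beta_i}\in\fs_1$ for $i\in S_1$ this gives $T_\ell\subseteq\fs_\ell$ for every $\ell$. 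The sum $\sum_\ell T_\ell$ contains $X$ and every $H_{\beta_i}-X$, hence every $H_{\beta_i}$, so $\sum_\ell T_\ell=\fh$; since $\dim\fh=\sum_\ell\dim\fs_\ell$ and $T_\ell\subseteq\fs_\ell$, comparing dimensions forces $T_\ell=\fs_\ell$ for all $\ell$. This is precisely the description of $\fg_1,\dots,\fg_k$ in the statement.

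It remains to see which $X$ can occur. I would compute $\dim T_\ell$ in terms of $X$: since $\{H_{\beta_i}\mid i\in S_\ell\}$ is linearly independent, $\dim T_1=\lambda_1-1$ if $X\in V_1$ and $\lambda_1$ otherwise, while for $\ell\ge 2$, $\dim T_\ell=\lambda_\ell$ unless $X\in V_\ell$ with coordinates in the basis $\{H_{\beta_i}\mid i\in S_\ell\}$ summing to $1$, in which case $\dim T_\ell=\lambda_\ell-1$. As the $V_\ell$ are in direct sum, such a dimension drop can occur in at most one of them, and $\sum_\ell\dim\fs_\ell=n=\bigl(\sum_\ell\lambda_\ell\bigr)-1$ forces it to occur in exactly one. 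Hence either $X\in V_1$ (arbitrary, $0$ allowed), or $X$ lies in the affine subspace $\{\sum_{i\in S_m}a_iH_{\beta_i}\mid\sum_i a_i=1\}$ for a single $2\le m\le k$, in which case $\fs_m=T_m$ has dimension $\lambda_m-1>0$, forcing $\lambda_m>1$; taking the representative $X=H_{\beta_p}$ with $p\in S_m$ on this affine subspace gives the form of $X$ in the statement. For the converse, every such $X$ does produce a regular decomposition: the root-part conditions are the closedness statements for $\Delta_\ell$ and $\Delta_\ell\sqcup\Delta_{\ell'}$ established in \cref{thm:graph_partition_A}, and the only Cartan condition needed, namely $H_{\beta_j}-H_{\beta_i}\in\fs_a\oplus\fs_b$ whenever $i\in S_a$, $j\in S_b$ (which is all that is required for $\fg_a$, $\fg_b$ and $\fg_a\oplus\fg_b$ to close under the bracket), is immediate from $\fs_\ell=T_\ell$ and $X\in\fs_1$.

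The main obstacle is the last bookkeeping step: showing the dimension drop lands in exactly one $V_\ell$, translating the ``coordinate sum $1$'' condition into the explicit list for $X$, and keeping track of the residual permutation symmetry of the blocks and of the degenerate blocks with $\lambda_m=1$. Everything else is either quoted from the preceding pages or a routine linear-algebra check.
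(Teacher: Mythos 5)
Your overall route is the same as the paper's: extract the common $\fs_1$-component $X$ of the $H_{\beta_i}$, $i \notin S_1$, show $T_\ell \subseteq \fs_\ell$ from \cref{eq:Cartan_part_second_case}, and force equality by a dimension count. That part is correct and, in fact, more transparent than the paper's case analysis. The genuine gap is the sentence ``taking the representative $X=H_{\beta_p}$ with $p\in S_m$ on this affine subspace gives the form of $X$ in the statement.'' Your own bookkeeping shows that the admissible $X$ outside $V_1$ sweep out the whole affine hyperplane $A_m=\{\sum_{i\in S_m}a_iH_{\beta_i}\mid \sum_i a_i=1\}$ of a single block with $\lambda_m>1$, and your own converse argument (the only Cartan condition is $H_{\beta_j}-H_{\beta_i}\in\fs_a\oplus\fs_b$, which holds for every $X$ making the sum $\sum_\ell T_\ell$ direct) applies verbatim to every point of $A_m$, not just to the vertices $H_{\beta_p}$. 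These points are not identified by the remaining equivalences: a Weyl-group element compatible with the block structure acts on the parameter $X\in A_m$ essentially by permuting the coordinates $a_i$, so for instance $X=\tfrac12(H_{\beta_p}+H_{\beta_q})$ with $p,q\in S_m$ lies in no orbit of any $H_{\beta_r}$. Concretely, for $\mathfrak{sl}(5,F)$ with $\lambda=(2,2,1)$, $S_1=\{0,1\}$, $S_2=\{2,3\}$, $S_3=\{4\}$, the choice $\fs_1=\mathrm{span}_F\{H_{\beta_1},\,H_{\beta_2}+H_{\beta_3}\}$, $\fs_2=F(H_{\beta_2}-H_{\beta_3})$, $\fs_3=F\bigl(H_{\beta_4}-\tfrac12(H_{\beta_2}+H_{\beta_3})\bigr)$ satisfies every closure condition and yields a $(3,3)$-regular decomposition not of the displayed form. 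So you cannot simply pick a representative: you would need an argument excluding $X\in A_m\setminus\{H_{\beta_p}\mid p\in S_m\}$, and the example shows no such argument exists.

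What your computation has actually uncovered is the weak point of the paper's own proof: in the case ``all $H_{\beta_\ell}\notin\fs_1$'' the paper asserts $\bigoplus_{m\in S_i}F(H_{\beta_m}-X)=\fs_i$, i.e.\ linear independence of the $H_{\beta_m}-X$, which is exactly what fails on $A_m\setminus\{H_{\beta_p}\}$; from that unproved independence it deduces $X\in V_1$. Your dimension count makes this failure visible instead of hiding it. Carried through honestly, your argument establishes the proposition with the admissible set for $X$ enlarged from $V_1\cup\{H_{\beta_p}\}$ to $V_1\cup\bigcup_{\lambda_m>1}A_m$; to match the stated proposition you would either have to close this gap (which cannot be done) or revise the statement to the larger set.
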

\begin{proof}
    Assume first that there is an index $\ell \in S_i$, $2 \le i \le k$, such that $H_{\beta_\ell} \in \fs_i$
    From the explicit form \cref{eq:explicit_form_g_i} we know that $H_{\beta_\ell - \beta_m} \in \fs_i $ for all $m \in S_i$.
    Therefore, $H_{\beta_m} \in \fs_i$ for all $m \in S_i$.
    Furthermore, combining this argument with the observation just before the proposition with $x_1 = 0$ shows that
    $$
    \bigoplus_{m \in S_j} FH_{\beta_m} = \fs_j
    $$
    for all $2 \le j \le k$. This gives the statement of the proposition with $X = 0$.
    
    Now we suppose that $H_\ell \not \in \fs_i$ for all $\ell \in S_i$ and $2 \le i \le k$.
    There are two cases to consider:
    \begin{itemize}
        \item All $H_\ell \not \in \fs_1$. Then each of them can be represented in the form $H_\ell = x_\ell + y_\ell$ for some non-zero $x_\ell \in \fs_1$ and $y_\ell \in \fs_i$.
        By the observation preceding the proposition all $x_\ell$ must be equal to a unique $X \in \fs_1$ and hence $$\bigoplus_{m \in S_i} F(H_{\beta_m}-X) = \fs_i.$$
        By counting dimensions we get  $$
        \fs_1 = \textnormal{span}_F \{X, H_{\beta_1}, \dots, H_{\beta_{\lambda_1 - 1}} \} = \textnormal{span}_F \{H_{\beta_1}, \dots, H_{\beta_{\lambda_1 - 1}} \}.
        $$
        \item There is an $2 \le i \le k$ and $\ell \in S_i$ such that $|S_i| \ge 2$ and $H_{\beta_\ell} \in \fs_1$.
        Such an element $H_{\beta_\ell}$ must be unique. Indeed, if there is another $H_{\beta_m} \in \fs_1$ for some $m \in S_j$, $2 \le j \le k$, then we get a contradiction
        $$
        \fs_1 \ni H_{\beta_\ell - \beta_m} \in [FE_{\beta_\ell - \beta_m}, FE_{\beta_m - \beta_\ell}] \in \fs_i \oplus \fs_j.
        $$
        Therefore, all $H_{\beta_m}$ with $\ell \neq m \in S_j$, $2 \le j \le k$ can be uniquely represented as $H_{\beta_m} = X + y_m$ for $X \in \fs_1$ and $y_m \in \fs_j$.
        Since $$
        H_{\beta_\ell - \beta_m} = \underbrace{(H_{\beta_\ell} - X)}_{\in \fs_1} - y_m \in \fs_i + \fs_j
        $$
        we have $X = H_{\beta_\ell}$.
    \end{itemize}
\end{proof}


\newpage
\printbibliography

\end{document}